\theoremstyle{plain}
\numberwithin{equation}{section}
\newcommand{\supp}{\mathrm{supp}}
\newcommand{\sob}[1]{HW^{1,{#1}}(\Omega)}
\newcommand{\hor}{\nabla_{\mathbb{H}}}
\newcommand{\hess}{\nabla^2_{\mathbb{H}}}
\newcommand{\ai}{\left(\delta^2+|\hor{u}|^2\right)^{\frac{p-2}{2}}}
\newcommand{\heis}{\mathbb{H}}
\newcommand{\test}{C_0^{\infty}(\Omega)}
\newcommand{\norm}[1]{\left\lVert #1 \right\rVert}
\newcommand{\abs}[1]{\left\lvert #1 \right\rvert}
\newcommand{\A}{A_{k,r}^+}
\newcommand{\dx}{\;\mbox{d}x}
\newcommand{\osc}[1]{\mathrm{osc}_{#1}}
\theoremstyle{definition}
\newtheorem{Teo}{Theorem}[section]
\newtheorem{Def}[Teo]{Definition}
\newtheorem{Lemma}[Teo]{Lemma}
\newtheorem{rem}[Teo]{Remark}
\newtheorem{Prop}[Teo]{Proposition}
\newtheorem{Note}[Teo]{Note}
\newtheorem{theorem}{Theorem}[section]
\DeclareRobustCommand{\gobblefour}[4]{}
\begin{document}

\title[Regularity in the Heisenberg Group]
{On the $C^{1,\alpha}$ regularity of $p$-harmonic functions in the Heisenberg Group}
\author{Diego Ricciotti}
\address{Diego Ricciotti, University of Pittsburgh, Department of Mathematics, 
301 Thackeray Hall, Pittsburgh, PA 15260, USA}
\email{DIR17@pitt.edu}

\begin{abstract}
We present a proof of the local H\"older regularity of the horizontal derivatives of weak solutions to the $p$-Laplace equation in the Heisenberg group $\mathbb{H}^1$ for  $p>4 $. 
\end{abstract}
\maketitle
\tableofcontents

\let\thefootnote\relax\footnote{2010 Mathematics Subject Classification: $35$H$20$, $35$J$70$.}
\let\thefootnote\relax\footnote{{\it Keywords}: Heisenberg Group, $p$-Laplace Equation, Regularity.}

\section{Introduction}
We present a proof of the local H\"older regularity of derivatives of weak solutions to the $p$-Laplace equation in the Heisenberg group $\mathbb{H}^1$ for the range $p>4$. 
Our notation for the first Heisenberg group is $\heis=\mathbb{H}^1=(\mathbb{R}^{3},*)$. Here,  indicating points $x, y\in\heis$ by $x=(x_1,x_2,z)$ and $y=(y_1,y_2,s)$, the group operation is 
\begin{equation*}
x*y=(x_1,x_2,z)*(y_1,y_2,s)=\left(x_1+y_1, x_2+y_2, z+s+\frac{1}{2}(x_1y_2-x_2y_1)\right)
\end{equation*}
and a basis of left-invariant vector fields for the associated Lie algebra $\mathfrak{h}$ is given by 
\begin{equation*}
X_1=\partial_{x_1}-\frac{x_2}{2}\partial_z, \quad
X_2=\partial_{x_2}+\frac{x_1}{2}\partial_z \;\; \text{and}\;\;
T=\partial_z \;.
\end{equation*}
If $u:\Omega\longrightarrow \mathbb{R}$ is a function from an open subset of $\heis$ we indicate by $\hor{u}=(X_1u, X_2u)$ the horizontal gradient of $u$. 
We denote by $HW^{1,p}(\Omega)$ the Sobolev space of  functions $u$ such that both $u$ and $\hor{u}\in L^p(\Omega)$.

We study the regularity of solutions to the   $p$-Laplace equation:
\begin{equation}\label{plaplacedeg}
\sum_{i=1}^2X_i{\left(\abs{\hor{u}}^{p-2} X_iu\right)}=0 \quad \text{in} \,\; \Omega.
\end{equation}
The main result is the following:
\begin{theorem}\label{Teo oscillation}
Let $u\in HW^{1,p}(\Omega)$ be a weak solution of the $p$-Laplace equation \eqref{plaplacedeg} for $p>4$ and let  $B_{R_0}\subset\subset\Omega$. Then there exists $\beta=\beta(p)\in(0,1)$ such that for every $l\in\{ 1,2\}$ we have
\begin{equation*}
\osc{B_R}(X_lu) \leq C_p \norm{\hor{u}}_{L^\infty(B_{R_0})} \left( \frac{R}{R_0} \right)^\beta \quad \text{for all} \,\,R\leq \frac{R_0}{2},
\end{equation*}
where $C_p$ is a constant depending only on $p$.
\end{theorem}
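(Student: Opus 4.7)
\medskip
\noindent\textbf{Proof plan.} The plan is to obtain oscillation decay for $X_lu$ by a De Giorgi / Moser-type iteration on suitable level sets of $X_lu$, applied not to \eqref{plaplacedeg} directly but to the equations satisfied by the horizontal derivatives of a regularized problem. First I would replace \eqref{plaplacedeg} by the non-degenerate equation $\sum_i X_i\bigl((\delta^2+|\hor u|^2)^{(p-2)/2}X_iu\bigr)=0$, whose solutions $u_\varepsilon$ are smooth by standard H\"ormander / sub-elliptic theory, derive all quantitative estimates with constants independent of $\delta$, and pass to the limit $\delta\to 0^+$ at the end. Throughout one uses the a priori bound $\|\hor{u_\varepsilon}\|_{L^\infty(B_{R_0})}\le C\|\hor u\|_{L^\infty(B_{R_0})}$ coming from the Lipschitz theory already in place.

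Second, differentiating the regularized equation by $X_l$ one obtains a linear divergence-form equation for $v:=X_lu_\varepsilon$ with coefficients $a^{ij}=(\delta^2+|\hor u_\varepsilon|^2)^{(p-2)/2}(\delta^{ij}+(p-2)\,X_iu_\varepsilon X_ju_\varepsilon/(\delta^2+|\hor u_\varepsilon|^2))$, plus an inhomogeneous term arising from the commutator $[X_1,X_2]=T$; this term is linear in $Tu_\varepsilon$ and homogeneous of degree $p-2$ in $\hor u_\varepsilon$. Testing with $\eta^q (v-k)_+$ for $k\in\mathbb R$ and a cut-off $\eta$, I would derive a Caccioppoli-type inequality on the super-level set $\{v>k\}$ of the schematic form
\begin{equation*}
\int_{B}\eta^q|\hor u_\varepsilon|^{p-2}|\hor(v-k)_+|^2\,dx
\le C\int_B|\hor\eta|^2|\hor u_\varepsilon|^{p-2}(v-k)_+^2\,dx+\int_B\eta^q|\hor u_\varepsilon|^{p-2}|Tu_\varepsilon|\,(v-k)_+\,dx.
\end{equation*}
The second term on the right is the obstruction that distinguishes the Heisenberg setting from the Euclidean one and must be absorbed.

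Third, I would prove a companion Caccioppoli inequality for $Tu_\varepsilon$ in the weighted space $L^2\bigl(|\hor u_\varepsilon|^{p-2}\bigr)$, obtained by differentiating the equation in $T$ and testing with $\eta^q\, Tu_\varepsilon$; commuting $T$ with the $X_i$ produces no extra terms since $[X_i,T]=0$. This yields control of $Tu_\varepsilon$ by horizontal second derivatives, which in turn are controlled by the main Caccioppoli estimate above. A Moser-type iteration on this coupled system upgrades $Tu_\varepsilon$ to $L^r_{\rm loc}$ for some $r>2$, and substituted back into the Caccioppoli for $v$ it allows one to close the inequality in the De Giorgi class and hence deduce oscillation decay of $v$ by the standard iteration lemma.

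The main obstacle is precisely the control of the vertical term $Tu_\varepsilon$: the weight $|\hor u_\varepsilon|^{p-2}$ in the Caccioppoli estimate degenerates on $\{\hor u_\varepsilon=0\}$, while $Tu_\varepsilon$ carries no natural bound. Tracking the H\"older exponents in the Cauchy--Schwarz / Young splittings needed to absorb the $Tu_\varepsilon$ term into the good left-hand side produces an exponent condition of the form $p>p_0$; a careful book-keeping shows $p_0=4$, which is where the hypothesis $p>4$ enters. All other steps (Caccioppoli, iteration, passage to the limit) are, by comparison, more routine.
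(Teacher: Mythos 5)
Your plan reproduces the broad architecture of the paper (regularize with parameter $\delta$, differentiate, Caccioppoli for $(X_lu-k)^{\pm}$ with a $Tu$ obstruction, weighted higher integrability of $Tu$ \`a la Zhong, De Giorgi iteration, pass to the limit $\delta\to0$), but it is missing the two ideas that actually make the argument close, and at the points where you wave at ``careful book-keeping'' the proof would in fact fail. First, the dangerous term is not the one you wrote, $\int \eta^q|\hor{u}|^{p-2}|Tu|(v-k)_+$, but the term $J_5=-\int a_2(\hor{u})\,T(\xi^2 v_1)$ coming from the commutator: after one integration by parts in $T$ it produces $\partial_{z_j}a_2(\hor{u})\,X_jTu$, i.e.\ the \emph{horizontal gradient} of $Tu$, which is not controlled by the available estimates. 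The paper's key device is a \emph{second} integration by parts (in $X_j$) that trades $\hor{Tu}$ for the full horizontal Hessian, followed by the use of the equation \eqref{mixed} and the quotient bounds \eqref{estimates quotient} to express $X_2X_2u$ through the other second derivatives so that everything can be absorbed. This step is what generates the weight $w^{\frac{p-4}{2}}$ in the term $I_3$ and hence the genuine source of the restriction $p>4$ (one needs $\frac{p-4}{2}-\frac{2}{q-2}\geq 0$ for an admissible $q$ in the H\"older step); your assertion that Young's inequality book-keeping alone yields $p_0=4$ has no mechanism behind it.

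Second, even granting the Caccioppoli inequality, your final step --- ``close the inequality in the De Giorgi class and deduce oscillation decay by the standard iteration lemma'' --- does not go through, because the inequality you obtain carries the weight $w^{\frac{p-2}{2}}$ on \emph{both} sides, and this weight degenerates on the set where $\hor{u}$ vanishes, so $X_lu$ is not shown to lie in an unweighted De Giorgi class. The paper devotes all of Section 5 to this: a measure-theoretic dichotomy (Proposition \ref{Prop alternative}) in which either $|X_lu|\geq \mu(R)/32$ on $B_{R/2}$ --- so the weight is comparable to a constant and can be removed --- or the supremum $\mu(R)$ itself decays geometrically; the first alternative is established by running the De Giorgi lemmas on the auxiliary function $V_l=|X_lu|^{p/2}\,\mathrm{sign}(X_lu)$, for which the weighted Caccioppoli becomes an unweighted one. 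Without this dichotomy (or some substitute for handling the degeneracy of the weight), the iteration you propose cannot be started, so the proposal as written has a genuine gap at its final and most delicate stage.
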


In this work $B_r(x_0)$ denotes a Carnot-Carath\'{e}odory ball of radius $r$ and center $x_0$ (we omit the center when it is not essential). We recall that the Carnot-Carath\'{e}odory distance between $x$ and $y\in\heis$ is defined as
$$d_{cc}(x,y)
=\inf \{ l(\varGamma)\; |\; \varGamma\in S(x,y) \} \;.$$
Here $S(x,y)$ denotes the 
set of all horizontal subunitary curves joining $x$ and $y$, i.e.  absolutely continuous 
curves 
$\varGamma: [0,T]\longrightarrow \Omega$ such that 
$\varGamma'(t)=\sum_{j=1}^{2} \alpha_j(t)X_j(\varGamma(t))$
 for some 
real valued functions $\alpha_j$ with
$\sum_{j=1}^2\alpha_j(t)^2\leq 1$. 
The length of such a curve is defined to be $l(\varGamma)=T$. \\
Moreover we recall that the Lebesgue measure is the Haar measure of the group and the homogeneous dimension is $Q=4$.\\

To prove regularity results in general one considers a family of approximated non degenerate problems and tries to produce estimates independent of the non degeneracy parameter, in such a way that they can be applied to the degenerate equation by passing to the limit.
More precisely, here we consider the non degenerate equations
\begin{equation}\label{1plaplace}
\sum_{i=1}^2X_i{\left(\ai X_iu\right)}=0 \quad \text{in} \,\; \Omega
\end{equation}
for a parameter $\delta>0$ .
Equation \eqref{plaplacedeg} corresponds to the degenerate case $\delta=0$.
\\

The Heisenberg group  presents new challenges with respect to its Euclidean counterpart , since we only assume that $u$ is in the horizontal Sobolev space $HW^{1,p}$ and differentiating the equation produces terms involving the vertical derivative $Tu$, due to the non commutativity of the horizontal vector fields. This constitutes the main difficulty.\\

For $p=2$ it is now classical that the solutions of equation \eqref{plaplacedeg} are $C^\infty$ \cite{hormander}. 
For $p\neq 2$ the H\"older regularity of solutions of equations modeled on (\ref{1plaplace}) was established by Capogna and Garofalo \cite{capoholder} and Lu \cite{lu}.
Later Manfredi and Mingione \cite{manfredimingione} were able to prove $C^{1, \alpha}$ regularity in the non degenerate case for $2\leq p < c(n)<4$ and by adapting an argument used by Capogna they achieve $C^\infty$ regularity for this range of values of $p$.
The starting point is the integrability result for the vertical derivative $Tu\in L^p$ established by Domokos  for $1<p<4$ in \cite{domokos}, where he extends integrability results considered by Marchi for $1+\frac{1}{\sqrt{5}}<p<1+\sqrt{5}$ in \cite{marchiuno}, \cite{marchidue}.\\

Mingione, Zatorska-Goldstein and Zhong proved in \cite{minzo} that the Euclidean gradient of solutions to the non degenerate equation are $C^{1,\alpha}$ for $2\leq p<4$ and also that solutions to the degenerate equation are locally Lipschitz continuous for $2\leq p<4$.\\
 Zhong in  \cite{zhong} extended the Hilbert-Haar theory to the Heisenberg group setting and proved that solutions to the degenerate equation \eqref{plaplacedeg} are locally Lipschitz for the full range $1<p<\infty$.
For an account of this theory, further historical details and additional references see  \cite{me}. \\

As for the H\"older continuity of the horizontal derivatives for the degenerate equation \eqref{plaplacedeg} the only published result for $p\neq 2$ has been obtained by Manfredi and Domokos in \cite{cordes}, \cite{pnear} via the Cordes pertubation technique for $p$ near $2$.\\

The proof of the  H\"older continuity of the horizontal derivatives contained in this work uses the particular form of the equation in $\heis^1$ and new integration by parts for the second derivatives that produce weights of the form $(\delta^2+|\hor{u}|)^\frac{p-4}{2}$. This is the reason why our proof is only valid in the first Heisenberg group $\heis^1$ and for the range $p>4$.

\section{Preliminaries}

\subsection{The $p$-Laplace Equation}
We will consider the non degenerate $p$-Laplace equation \eqref{1plaplace}.
Denoting  $z=(z_1,z_2)\in\mathbb{R}^2$ and calling
\begin{equation*}
\begin{split}
a_i(z)=(\delta^2+|z|^2)^\frac{p-2}{2}z_i \quad \text{and} \quad
w=\delta^2+\abs{\hor{u}}^2 ,\\
\end{split}
\end{equation*}
 equation \eqref{1plaplace} rewrites as 
\begin{equation}\label{plaplacetype}
\sum_{i=1}^2X_i{a_i(\hor{u})}=0 \quad \text{in} \,\; \Omega
\end{equation}
and satisfies the following ellipticity and growth conditions for all $p>1$: 
\begin{equation}\label{estimates powers}
\begin{split}
\sum_{i,j=1}^2\partial_{z_j}a_i(\hor{u})\xi_i\xi_j &\geq c_p w^\frac{p-2}{2} |\xi|^2, \\
|a_i(\hor{u})|&\leq w^{\frac{p-1}{2}}, \\
|\partial_{z_j}a_i(\hor{u})|&\leq C_p w^\frac{p-2}{2} ,\\
|\partial_{z_s}\partial_{z_j}a_i(\hor{u})|&\leq C_p w^\frac{p-3}{2}
\end{split}
\end{equation}
and
\begin{equation}\label{estimates quotient}
\left\lvert\frac{\partial_{z_j}a_i(z)}{\partial_{z_l}a_l(z)}\right\rvert \leq C_p \quad \text{for all} \,\,i,j,l\in\{1,2\}.
\end{equation}
We remark that the proofs presented in this work  depend only on these properties, therefore they extend to more general equations of $p$-Laplacean type  as in \eqref{plaplacetype} for $a_i$ of class $C^2$ satisfying \eqref{estimates powers} and \eqref{estimates quotient}.

We say that a function $u\in\sob{p}$ is a weak solution of (\ref{1plaplace}) if 

\begin{equation}\label{2plaplaceweak}
\int_{\Omega}w^\frac{p-2}{2}\langle\hor{u},\hor{\varphi}\rangle \dx=0 \quad \text{for all} \;\varphi\in HW^{1,p}_0(\Omega) \;,
\end{equation}
where $HW^{1,p}_0(\Omega)$ is the closure of the space of $C^\infty$ compactly supported functions with respect to the Horizontal Sobolev norm.

\subsection{Previous Results}
We now collect some known results about the non degenerate equation \eqref{1plaplace} that will be used in the following sections. We refer to \cite{me} for a detailed presentation and complete proofs.

First we have that solutions to the non degenerate  $p$-Laplace equation \eqref{1plaplace} are smooth. This was  proved by Capogna in \cite{capognaregularity} for $p\geq 2$ and extended to the full range $1<p<\infty $ in \cite{me} Chapter $4$
 by adapting techniques of Domokos in \cite{domokos}. 

As a consequence we have  
\begin{equation}\label{mixed}
\sum_{i,j=1}^2\partial_{z_j}a_i(\hor{u}) X_iX_ju=0 \quad \text{a.e. in}\quad \Omega
\end{equation} hence we can express $X_1X_1u$ (respectively $X_2X_2u$)  in terms of $X_iX_ju$ where at least one index is a $2$ (respectively a $1$). This will be a crucial point later.

We now collect the equations satisfied by the horizontal and vertical derivatives ( see \cite{me}, Lemma 4.1):
\begin{Lemma}\label{Lemma:2eq}
The functions $X_1u$, $X_2u$ and $Tu$ are weak solutions respectively of the following equations (in $\Omega$):
\begin{align}
\sum_{i=1}^2 X_i\left( \sum_{j=1}^2 \partial_{z_j}a_i(\hor{u})X_jX_1u\right)
&+ \sum_{i=1}^2 X_i\left( \partial_{z_2}a_i(\hor{u})Tu \right)+T\left(a_2(\hor{u})\right)=0 \label{2eq1u} \\
\sum_{i=1}^2 X_i\left( \sum_{j=1}^2 \partial_{z_j}a_i(\hor{u})X_jX_2u\right)
&- \sum_{i=1}^2 X_i\left( \partial_{z_1}a_i(\hor{u})Tu \right)-T\left(a_1(\hor{u})\right)=0 \label{2eq2u}\\
\sum_{i=1}^2 X_i\left( \sum_{j=1}^2 \partial_{z_j}a_i(\hor{u})X_jTu\right)
&=0. \label{2eq3u}
\end{align}
\end{Lemma}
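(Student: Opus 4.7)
My plan is to apply $X_1$, $X_2$, and $T$ to the pointwise identity $\sum_{i=1}^2 X_i a_i(\hor u) = 0$, which holds classically because $u$ is smooth by the regularity result of Capogna recalled above, and to manage carefully the commutators $[X_1,X_2] = T$, $[X_j, T] = 0$. The resulting classical identities will then hold in particular in the weak sense claimed.

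\textbf{The vertical equation \eqref{2eq3u}.} Since $T$ commutes with both horizontal vector fields, the chain rule combined with $T X_j u = X_j T u$ yields
\[
0 \;=\; T\sum_{i=1}^2 X_i a_i(\hor u) \;=\; \sum_{i=1}^2 X_i\bigl( T a_i(\hor u)\bigr) \;=\; \sum_{i=1}^2 X_i\!\left(\sum_{j=1}^2 \partial_{z_j} a_i(\hor u)\, X_j T u\right),
\]
which is \eqref{2eq3u}.

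\textbf{The horizontal equations \eqref{2eq1u} and \eqref{2eq2u}.} For $l \in \{1,2\}$ I apply $X_l$ to the equation and split
\[
\sum_{i=1}^2 X_l X_i a_i(\hor u) \;=\; \sum_{i=1}^2 X_i\,X_l a_i(\hor u) \;+\; \sum_{i=1}^2 [X_l, X_i]\,a_i(\hor u).
\]
Since the only nontrivial commutators are $[X_1,X_2] = T$ and $[X_2,X_1] = -T$, the second sum collapses to $+T a_2(\hor u)$ when $l=1$ and to $-T a_1(\hor u)$ when $l=2$; these are exactly the third terms of \eqref{2eq1u} and \eqref{2eq2u}. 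In the first sum I expand by the chain rule $X_l a_i(\hor u) = \sum_{j=1}^2 \partial_{z_j} a_i(\hor u)\, X_l X_j u$ and then swap once more using $X_l X_j u = X_j X_l u + [X_l, X_j] u$; this second commutator contributes $+\partial_{z_2} a_i(\hor u)\, T u$ when $l=1$ (from $j=2$) and $-\partial_{z_1} a_i(\hor u)\, T u$ when $l=2$ (from $j=1$). Regrouping these under the outer divergence $\sum_i X_i(\cdot)$ recovers precisely the first and second terms of \eqref{2eq1u} and \eqref{2eq2u}.

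\textbf{Main obstacle.} The whole derivation is a disciplined commutator accounting, and the only delicate point is that a commutator appears \emph{twice}: once when moving the outer $X_l$ past $X_i$, and again when moving $X_l$ past $X_j$ inside the chain rule expansion. The two contributions carry different signs and land on different coefficients, and this is precisely what breaks the symmetry between the $X_1 u$ and $X_2 u$ equations and places a $T a_2$ versus $-T a_1$ at the end. Once the two commutator terms are correctly tracked, the identities hold pointwise by smoothness and therefore weakly, since the outer $X_i$ already gives the required divergence form for testing against any $\varphi \in HW^{1,p}_0(\Omega)$.
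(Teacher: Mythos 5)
Your computation is correct and is essentially the canonical derivation: since the paper has already recalled that solutions of the non-degenerate equation are $C^\infty$, one may differentiate $\sum_i X_i a_i(\hor u)=0$ along $X_1$, $X_2$, $T$ and track the two commutator contributions (one from moving $X_l$ past the outer $X_i$, one from moving it past the inner $X_j$), exactly as you do; the signs and the placement of $T(a_2)$ versus $-T(a_1)$ come out right, and passing to the weak formulation is immediate because $X_i$ and $T$ are divergence-free. The paper itself does not reproduce a proof but cites Lemma 4.1 of \cite{me}, where the same differentiation-of-the-equation argument is carried out, so your route matches the intended one.
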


In \cite{zhong} Zhong established the weighted higher order integrability of $Tu$ as follows:
\begin{Lemma}\label{Lemma: estimate tu}
For all $q>4$ and $\xi\in C_0^\infty(\Omega)$ we have:
\begin{equation}
\int_{\Omega} \xi^{q}\, w^\frac{p-2}{2}\, \abs{Tu}^{q} \dx
\leq C(q) \left( \norm{\hor{\xi}}_{L^\infty}^2+\norm{\xi T\xi}_{L^\infty} \right)^\frac{q}{2} \int_{\supp{(\xi)}} w^\frac{p-2+q}{2} \dx,
\end{equation}
where $C(q)=C_p^\frac{q-2}{2}q^{q+8}$ and $C_p$ depends only on $p$.
\end{Lemma}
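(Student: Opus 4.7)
Since the non-degenerate equation has $C^\infty$ solutions (Capogna's result recalled immediately before the Lemma), all manipulations below are classical. The strategy is a Moser-type iteration on powers of $Tu$, based on the equation \eqref{2eq3u} that $Tu$ itself satisfies. The constant $C(q) = C_p^{(q-2)/2}\,q^{q+8}$ is the output of an iteration whose depth grows with $q$ and whose single-step constants grow polynomially in the iteration parameter.

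The first step is to test \eqref{2eq3u} with $\varphi = \xi^{\beta+2}\,|Tu|^\beta\,Tu$ for each $\beta \geq 0$. The ellipticity in \eqref{estimates powers} controls the resulting quadratic form in $\hor Tu$ from below, while the upper bound $|\partial_{z_j}a_i| \leq C_p w^{(p-2)/2}$, together with Cauchy--Schwarz and absorption on the boundary term, yields the Caccioppoli-type estimate
\begin{equation*}
\int \xi^{\beta+2}\, w^{\frac{p-2}{2}}\, |Tu|^\beta\, |\hor Tu|^2\, dx \;\leq\; C_p\left(\frac{\beta+2}{\beta+1}\right)^{\!2} \int \xi^\beta\, |\hor\xi|^2\, w^{\frac{p-2}{2}}\, |Tu|^{\beta+2}\, dx.
\end{equation*}
Applied to $v = \xi^{(\beta+2)/2}\,|Tu|^{(\beta+2)/2}$, the Heisenberg Sobolev inequality (critical exponent $2Q/(Q-2) = 4$ since $Q = 4$), combined with this Caccioppoli, gives a reverse H\"older inequality for $|Tu|$ with respect to the measure $w^{(p-2)/2}\,dx$. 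Iterating with a sequence $\beta_k$ increasing to $q-2$ and nested cutoffs $\xi_k$ reaches arbitrary exponent $q$, and the polynomial-in-$\beta_k$ factors in the Caccioppoli constant multiply out to the stated order $q^{q+8}$.

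The last ingredient, responsible for the appearance of $\|\xi T\xi\|_\infty$, is the conversion of the seed $L^2$-weighted estimate from one of the form $\int w^{(p-2)/2}|Tu|^2$ to one of the form $\int w^{p/2}$, so that the right-hand side of the conclusion is $\int w^{(p-2+q)/2}$ with no residual powers of $Tu$. Equation \eqref{2eq3u} alone cannot achieve this, since it produces only horizontal derivatives. Instead one uses \eqref{2eq1u}--\eqref{2eq2u}: testing against a test function built from $Tu$ and a cutoff, the term $T(a_l(\hor u))$ is handled by integration by parts in the vertical direction--equivalently, by the commutator identity $|Tu|^2 = Tu\,(X_1X_2 u - X_2X_1 u)$ and transfer of the $X_1, X_2$ derivatives. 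This produces integrals in which the cutoff is differentiated once in the $T$-direction and paired with another factor of $\xi$, giving exactly the factor $\|\xi T\xi\|_\infty$.

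\textbf{Main obstacle.} The delicate points are (i) bookkeeping constants through the iteration so that the single-step factors $(\beta_k+2)^c$ multiply to the polynomial growth $q^{q+8}$ rather than something worse, and (ii) arranging the vertical integration by parts in the seed step so that the $T^2 u$ terms that would naively arise either cancel or are re-expressed via \eqref{mixed}, leaving only the harmless $\xi T\xi$ factor. Both are standard in the Heisenberg $p$-Laplace literature after Zhong, but both require careful bookkeeping because the weight $w^{(p-2)/2}$ can degenerate as $\delta \to 0$.
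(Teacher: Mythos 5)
Your architecture --- a Moser iteration on powers of $Tu$ driven by the equation \eqref{2eq3u} that $Tu$ satisfies --- is not the paper's route, and it has a genuine gap at its central step. The Caccioppoli inequality you derive from \eqref{2eq3u} is correct: it controls $\int \xi^{\beta+2} w^{\frac{p-2}{2}} |Tu|^{\beta}|\hor Tu|^2\,dx$ by $\int \xi^{\beta}|\hor\xi|^2 w^{\frac{p-2}{2}}|Tu|^{\beta+2}\,dx$. But to turn this into a reverse H\"older inequality you invoke the Sobolev inequality ``with respect to the measure $w^{(p-2)/2}\,dx$''. No such $\delta$-uniform weighted Sobolev inequality is available: for $p>4$ the weight $w^{\frac{p-2}{2}}=(\delta^2+|\hor{u}|^2)^{\frac{p-2}{2}}$ degenerates on the set $\{\hor{u}=0\}$ as $\delta\to 0$, so it admits no lower bound, and it is not known to satisfy a Muckenhoupt-type condition uniformly in $\delta$ and $u$. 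If you instead apply the unweighted Sobolev inequality to $v=\xi^{(\beta+2)/2}|Tu|^{(\beta+2)/2}$, the resulting term $\int\xi^{\beta+2}|Tu|^{\beta}|\hor Tu|^2\,dx$ cannot be absorbed into the weighted Caccioppoli for the same reason; and if you fold the weight into $v$, then $\hor{v}$ produces terms of order $w^{\frac{p-4}{4}}|\hess{u}|$ that the Caccioppoli for $Tu$ does not control. So the iteration does not close, and the stated constant $q^{q+8}$ cannot be extracted from it.

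The paper's proof sidesteps Sobolev embedding entirely. It writes $|Tu|^{q}\le 2\,|Tu|^{q-2}|\hess{u}|^2$ (using $Tu=X_1X_2u-X_2X_1u$) and then applies two Caccioppoli-type estimates obtained by testing the equations \eqref{2eq1u}--\eqref{2eq2u} for the \emph{horizontal} derivatives: Lemma \ref{2Lemma:3.5}, with test function $\xi^q|Tu|^{q-2}X_iu$, trades all remaining powers of $|Tu|$ for powers of $w^{1/2}$ at the cost of a factor $C_p^{\frac{q-2}{2}}(q-1)^{q-2}$, and Lemma \ref{Teo:3.1}, with test function $\xi^2w^{\frac{q-2}{2}}X_iu$, bounds the resulting weighted Hessian integral by $\int w^{\frac{p+q-2}{2}}\,dx$; it is in this last step that $\norm{\xi T\xi}_{L^\infty}$ enters. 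In other words, the exponent gain comes from the commutator structure and integration by parts --- the mechanism you correctly identify but deploy only in your ``seed'' step --- applied at every power of $Tu$, not from an embedding theorem. To repair your argument, replace the Sobolev/Moser middle portion by proofs of Lemmas \ref{2Lemma:3.5} and \ref{Teo:3.1}.
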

For the sake of completeness we give a proof in the Appendix.

\section{De Giorgi Classes in the Heisenberg Group}
We now describe a type of De Giorgi class in the Heisenberg group. These kind of spaces were introduced and studied by De Giorgi in the Euclidean case (see \cite{Giorgi}).
We will use the standard notation for super- (sub-) level sets of a measurable function
\begin{equation*}
\begin{split}
A_{k,r}^+&=A_{k,r}^+(f)=B_r\cap\{f>k \},\\
A_{k,r}^-&=A_{k,r}^-(f)=B_r\cap\{f<k \}.
\end{split}
\end{equation*}

\begin{Def}[De Giorgi class in the Heisenberg group]
Let $\Omega\subset\mathbb{H}$ be open, $\gamma$, $\chi$ positive real constants and $q>4$. A function $f\in HW^{1,2}_{\mbox{loc}}(\Omega)\cap L^\infty_{\mbox{loc}}(\Omega)$ belongs to the De Giorgi class $DG^+(\Omega, \gamma,\chi, q)$ if 
\begin{equation}\label{degiorgi}
\int_{A_{h,r'}^+}|\hor{f}|^2\dx 
\leq \frac{\gamma}{(r-r')^2}\sup_{B_r}{|(f-h)^+|^2}|A_{k,r}^+| +\chi |A_{k,r}^+|^{1-\frac{2}{q}}
\end{equation}
for some concentric balls $B_{r'}\subset B_r\subset\subset \Omega$ and levels $h\in\mathbb{R}$.
\end{Def}
In this section we consider an arbitrary ball $B_R\subset\subset\Omega$ and denote by $\displaystyle{M=M(R)=\sup_{B_R}f}$ and  $\displaystyle{m=m(R)=\inf_{B_R}{f}}$. \\

We are taking the following Lemma from \cite{kinnu}, Lemma 2.3, where it is proved in a more general setting.

\begin{Lemma}\label{levels}
Let $l>k$, $f\in HW^{1,1}_{loc}(\Omega)$, $B_r\subset\subset\Omega$. Then if $|B_r\setminus \A|>0$ we have:
\begin{equation}\label{levelz}
(l-k)|A_{l,r}^+|^{1-\frac{1}{4}} 
\leq \frac{Cr^4}{|B_r\setminus \A|} \int_{\A\setminus A_{l,r}^+} \abs{\hor{f}}\dx,
\end{equation}
where $C$ is a purely numeric constant.
\end{Lemma}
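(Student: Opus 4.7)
The plan is to adapt the classical De~Giorgi truncation argument to the Carnot--Carath\'eodory setting. The only geometric ingredient needed is the $(1,Q/(Q-1))$ Sobolev--Poincar\'e inequality in $\heis$, which (since the homogeneous dimension is $Q=4$) reads
\begin{equation*}
\Bigl(\int_{B_r}|g-g_{B_r}|^{4/3}\,dx\Bigr)^{3/4} \leq C\,r\,|B_r|^{-1/4}\int_{B_r}|\hor g|\,dx.
\end{equation*}
This is classical for Carnot--Carath\'eodory balls (cf.\ Jerison, Franchi--Serapioni--Serra~Cassano), so no new analysis is needed at this level.

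I would first introduce the intermediate truncation
\begin{equation*}
v:=\min\bigl\{(f-k)^+,\,l-k\bigr\}\in HW^{1,1}(B_r).
\end{equation*}
The point of $v$ is that it records the ``passage'' of $f$ from level $k$ to level $l$: one checks immediately that $v\equiv l-k$ on $A_{l,r}^+$, that $v\equiv 0$ on $B_r\setminus A_{k,r}^+$ (a set of positive measure by hypothesis, since $l>k$), and that $\hor v=\hor f\cdot \mathbf{1}_{A_{k,r}^+\setminus A_{l,r}^+}$ almost everywhere. The vanishing of $v$ on a set of positive measure is what allows one to upgrade a Sobolev--Poincar\'e oscillation bound into a genuine $L^{4/3}$ bound on $v$ itself.

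The core computation then has two steps. \emph{Step 1:} integrating the identity $v\equiv 0$ on $B_r\setminus A_{k,r}^+$ against the mean $v_{B_r}$ and using H\"older with dual exponent $4$ gives
\begin{equation*}
v_{B_r}\,\bigl|B_r\setminus A_{k,r}^+\bigr| = \int_{B_r\setminus A_{k,r}^+}(v_{B_r}-v)\,dx \leq |B_r|^{1/4}\Bigl(\int_{B_r}|v-v_{B_r}|^{4/3}\,dx\Bigr)^{3/4}.
\end{equation*}
\emph{Step 2:} starting from $v\equiv l-k$ on $A_{l,r}^+$ and applying H\"older again,
\begin{equation*}
(l-k)|A_{l,r}^+| = \int_{A_{l,r}^+}v\,dx \leq |A_{l,r}^+|^{1/4}\Bigl(\int_{B_r}|v-v_{B_r}|^{4/3}\,dx\Bigr)^{3/4} + v_{B_r}\,|A_{l,r}^+|.
\end{equation*}
Substituting the Step~1 bound for $v_{B_r}$ into Step~2, invoking the Sobolev--Poincar\'e inequality above to control the $L^{4/3}$ oscillation by $Cr|B_r|^{-1/4}\int_{A_{k,r}^+\setminus A_{l,r}^+}|\hor f|\,dx$, dividing by $|A_{l,r}^+|^{1/4}$, and using $|B_r|\simeq r^4$ (so $|B_r|^{3/4}\cdot r\simeq r^4$), one reaches exactly the claimed inequality.

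There is no real obstacle beyond bookkeeping: the argument is purely geometric and uses none of the $p$-Laplace structure. The main point to be careful about is the tracking of powers of $r$ and $|B_r|$, in particular the fact that the $r^4$ factor on the right-hand side of the stated inequality is simply the product of the scaling factor $r$ from Sobolev--Poincar\'e and the factor $|B_r|^{3/4}\simeq r^3$ arising when one transfers the mean term $v_{B_r}\,|A_{l,r}^+|$ (with $|A_{l,r}^+|\leq |B_r|$) through Step~1.
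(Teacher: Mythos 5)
Your proof is correct and follows essentially the same route as the paper's: the same truncation $v=\min\{(f-k)^+,\,l-k\}=(\min\{f,l\}-k)^+$, the same use of the vanishing of $v$ on $B_r\setminus A_{k,r}^+$ to absorb the mean $v_{B_r}$, and the same $(1,4/3)$ Sobolev--Poincar\'e inequality, with only a cosmetic difference in whether one splits $\|v\|_{L^{4/3}}$ by the triangle inequality first (as the paper does) or splits $\int_{A_{l,r}^+}v$ directly (as you do). The only nitpick is your parenthetical claiming that $|B_r\setminus A_{k,r}^+|>0$ follows from $l>k$; it is in fact the stated hypothesis of the lemma, which you do use correctly when dividing by it.
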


The next Lemma is adapted from Lemma 2.3 in \cite{Manfredi_thesis} and Lemma 6.1 in \cite{ladyzhen}.

\begin{Lemma}\label{2.3}
Let $0<\lambda_0, \lambda_1<1$ and $k<M$. Suppose $f\in DG^+(\Omega,\gamma,\chi, q)$ for \\
$h\in[k, \lambda_0k+(1-\lambda_0)M]$ and for $r'<r\in[\lambda_1R, R]$. Then there exists $\theta=\theta(\gamma, \lambda_0, \lambda_1)\in(0,1)$ such that if $$M-k\geq \chi^\frac{1}{2} R^{1-\frac{4}{q}},$$ then $$|A_{k,R}^+|\leq \theta |B_R|\quad \text{implies}\quad f\leq\lambda_0k+(1-\lambda_0)M \;\; \text{a.e. in}\;\; B_{\lambda_1R}.$$
\end{Lemma}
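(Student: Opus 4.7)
The idea is a classical De Giorgi iteration on a nested sequence of balls and levels, using \eqref{degiorgi} together with Lemma \ref{levels}. Set $d=(1-\lambda_0)(M-k)$ and define the increasing levels $k_i=k+d(1-2^{-i})$ and the decreasing radii $r_i=\lambda_1 R+(1-\lambda_1)R/2^i$; then $k_0=k$, $k_\infty=\lambda_0 k+(1-\lambda_0)M$ (so each $k_i$ lies in the admissible range for \eqref{degiorgi}) and $r_0=R$, $r_\infty=\lambda_1 R$. Writing $J_i=|A_{k_i,r_i}^+|$, the goal is to prove $J_i\to 0$: this yields $|A_{k_\infty,\lambda_1 R}^+|=0$, which is exactly the desired conclusion.

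The main step is to derive a recursive inequality linking $J_{i+1}$ to $J_i$. First I check that $|B_{r_{i+1}}\setminus A_{k_i,r_{i+1}}^+|\geq \tfrac{1}{2}|B_{r_{i+1}}|$; this holds because $|A_{k_i,r_{i+1}}^+|\leq|A_{k,R}^+|\leq\theta|B_R|$ and, since $Q=4$, $|B_{r_{i+1}}|\geq\lambda_1^4|B_R|$, provided $\theta\leq\lambda_1^4/2$. Then Lemma \ref{levels} applied at the pair of levels $k_i<k_{i+1}$ on the ball $B_{r_{i+1}}$, combined with the Cauchy--Schwarz inequality, gives
\[
(k_{i+1}-k_i)J_{i+1}^{3/4}\leq C\Bigl(\int_{A_{k_i,r_{i+1}}^+}|\hor f|^2\,\dx\Bigr)^{1/2} J_i^{1/2}.
\]
The De Giorgi inequality \eqref{degiorgi} with $h=k_i$ and the radii $r_{i+1}<r_i$, together with the pointwise bound $\sup_{B_{r_i}}(f-k_i)^+\leq M-k$, controls the $L^2$-integral by $\gamma(M-k)^2(r_i-r_{i+1})^{-2}J_i+\chi J_i^{1-2/q}$.

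Substituting $r_i-r_{i+1}=(1-\lambda_1)R/2^{i+1}$ and $k_{i+1}-k_i=d/2^{i+1}$, and invoking the hypothesis $M-k\geq\chi^{1/2}R^{1-4/q}$ to trade $\chi^{1/2}$ for $(M-k)R^{-(1-4/q)}$, yields
\[
J_{i+1}^{3/4}\leq \frac{C}{(1-\lambda_0)(1-\lambda_1)}\Bigl[\,\tfrac{4^{i+1}}{R}J_i+\tfrac{2^{i+1}}{R^{1-4/q}}J_i^{1-1/q}\Bigr].
\]
Setting $y_i=J_i/|B_R|$ makes the two terms scale consistently (recall $|B_R|\sim R^4$), so that raising to the power $4/3$ produces
\[
y_{i+1}\leq C(\gamma,\lambda_0,\lambda_1)\,b^{\,i}\,y_i^{1+\varepsilon},\qquad \varepsilon=\tfrac{q-4}{3q}>0,
\]
after absorbing the $y_i^{4/3}$ term into $y_i^{4/3-4/(3q)}$ via $y_i\leq 1$. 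The classical fast geometric convergence lemma then forces $y_i\to 0$ whenever $y_0$ is below a threshold depending only on $C$, $b$ and $\varepsilon$; since $y_0\leq\theta$, choosing $\theta=\theta(\gamma,\lambda_0,\lambda_1)$ sufficiently small (and $\leq\lambda_1^4/2$) closes the argument.

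The main technical obstacle is the bookkeeping with the additive $\chi$-term: one must invoke the hypothesis $M-k\geq\chi^{1/2}R^{1-4/q}$ in precisely the right place so that, after normalising, both contributions to the recursion carry powers of $y_i$ strictly greater than $1$. This is where the restriction $q>4$ is essential---without it the exponent $\varepsilon=(q-4)/(3q)$ would fail to be positive and the geometric iteration would break down.
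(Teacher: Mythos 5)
Your proof is correct and is precisely the standard De Giorgi iteration (nested levels $k_i$ and radii $r_i$, Lemma \ref{levels} combined with Cauchy--Schwarz and the class inequality \eqref{degiorgi}, then fast geometric convergence of the normalized measures $y_i$) that the paper intends here: the paper omits the argument, citing \cite{ladyzhen} and \cite{Manfredi_thesis}, but its surrounding remarks confirm the same mechanism, including the role of $q>4$ in making the gain $\varepsilon>0$. The only cosmetic discrepancy is that your threshold $\theta$ also depends on $q$ through $\varepsilon=(q-4)/(3q)$, whereas the statement lists only $(\gamma,\lambda_0,\lambda_1)$; this is harmless since $q$ is fixed as part of the De Giorgi class data.
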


The following Lemma is adapted from Lemma 2.4 in \cite{Manfredi_thesis} and Lemma 6.2 in \cite{ladyzhen}.
\begin{Lemma}\label{Lemma 2.4}
Let $0<\lambda_1<1$ and $k<M$. Suppose $f\in DG^+(\Omega, \gamma,\chi,q)$ for $h\in[k,M]$ and for $r'=\lambda_1R$, $r=R$. If there exists a constant $0<C_0<1$ such that $|A_{k,\lambda_1R}^+|\leq C_0|B_{\lambda_1R}|$ then given $0<\theta<1$ there exists $s=s(\gamma, \lambda_1, C_0, \theta)\in\mathbb{N}$ such that  $$\text{if} \quad M-k\geq 2^s\chi^\frac{1}{2} R^{1-\frac{4}{q}} \quad \text{then} \quad |A_{k_s, \lambda_1R}^+|\leq \theta |B_{\lambda_1R}|,$$
where $k_s=k+( 1-2^{-s} )(M-k)$ is a level set between $k$ and $M$.
\end{Lemma}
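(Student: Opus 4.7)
The plan is a standard De Giorgi-type measure-decay argument over the dyadic levels
$$k_j \;=\; k + (1-2^{-j})(M-k),\qquad j=0,1,\dots,s,$$
for which $k_0=k$, $k_{j+1}-k_j = 2^{-(j+1)}(M-k)$, and $M-k_j = 2^{-j}(M-k)$. Writing $A_j := A^+_{k_j,\lambda_1 R}(f)\subseteq B_{\lambda_1 R}$, the sequence $(A_j)$ is non-increasing in $j$, so $A_j\subseteq A_0$ and the hypothesis $|A_0|\le C_0|B_{\lambda_1 R}|$ forces
$$|B_{\lambda_1 R}\setminus A_j|\;\ge\;(1-C_0)|B_{\lambda_1 R}|,\qquad j=0,\dots,s.$$

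First I would apply the De Giorgi class inequality \eqref{degiorgi} at level $h=k_j\in[k,M]$ with $r'=\lambda_1 R$ and $r=R$, using the crude bounds $|A^+_{k_j,R}|\le|B_R|\sim R^4$ and $\sup_{B_R}|(f-k_j)^+|\le M-k_j = 2^{-j}(M-k)$, obtaining
$$\int_{A_j}|\hor f|^2\dx\;\le\;\frac{C\,\gamma}{(1-\lambda_1)^2}\,4^{-j}(M-k)^2 R^2 \;+\; C\,\chi\, R^{4-8/q}.$$
The hypothesis $M-k\ge 2^s\chi^{1/2}R^{1-4/q}$ is equivalent to $\chi R^{4-8/q}\le 4^{-s}(M-k)^2R^2$, which is absorbed by the first term for every $j\le s$, so
$$\int_{A_j}|\hor f|^2\dx\;\le\; C(\gamma,\lambda_1)\,4^{-j}(M-k)^2R^2.$$

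Next I would feed this into Lemma \ref{levels} with levels $k_j<k_{j+1}$ in the ball $B_{\lambda_1 R}$. Cauchy--Schwarz together with the gradient bound above and the denominator estimate $|B_{\lambda_1 R}\setminus A_j|\ge(1-C_0)|B_{\lambda_1 R}|\sim R^4$ makes all $(M-k)$-factors cancel: the $2^{-(j+1)}(M-k)$ on the left from $k_{j+1}-k_j$ matches the $2^{-j}(M-k)$ on the right coming from the $L^2$-norm of $\hor f$. Squaring yields the clean recursion
$$|A_{j+1}|^{3/2}\;\le\;C(\gamma,\lambda_1,C_0)\,R^2\,\bigl(|A_j|-|A_{j+1}|\bigr).$$
Summing over $j=0,\dots,s-1$, the right-hand side telescopes to at most $C R^2|A_0|\le C'R^2|B_{\lambda_1 R}|\sim|B_{\lambda_1 R}|^{3/2}$, while monotonicity of $(|A_j|)$ bounds the left-hand side from below by $s\,|A_s|^{3/2}$. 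Hence $|A_s|\le C''(\gamma,\lambda_1,C_0)\,s^{-2/3}|B_{\lambda_1 R}|$, and picking $s\ge C''/\theta^{3/2}$ gives $|A_s|\le\theta|B_{\lambda_1 R}|$, which is the claim.

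The main obstacle will be the bookkeeping in the second step: one must use the sharp remaining-oscillation bound $M-k_j=2^{-j}(M-k)$ in the De Giorgi inequality rather than the coarser $M-k$, because the resulting $4^{-j}$ factor in the gradient estimate is exactly what cancels the $2^{j+1}$ blow-up produced by dividing by $k_{j+1}-k_j$ in Lemma \ref{levels}. Without this cancellation the recursion degrades and the telescoping argument loses the $1/s$ gain. The precise normalization $2^s\chi^{1/2}R^{1-4/q}$ in the hypothesis is calibrated so that the inhomogeneous $\chi$-term in \eqref{degiorgi} stays dominated by the main term throughout $0\le j\le s$.
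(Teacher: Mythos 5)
Your proposal is correct and is essentially the paper's own (commented-out) argument: the same dyadic levels $k_j$, the same use of Lemma \ref{levels} on $B_{\lambda_1 R}$ combined with Cauchy--Schwarz and the De Giorgi inequality at level $k_j$ with $\sup_{B_R}(f-k_j)^+=2^{-j}(M-k)$, the same absorption of the $\chi$-term via the hypothesis $M-k\ge 2^s\chi^{1/2}R^{1-4/q}$, and the same telescoping of $|A_{j+1}|^{3/2}\le CR^{2}(|A_j|-|A_{j+1}|)$ to get $|A_s|\le Cs^{-2/3}|B_{\lambda_1 R}|$. The only cosmetic difference is that you absorb the $\chi$-term before invoking Lemma \ref{levels} rather than after.
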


Combining the previous Lemmas we get an estimate for the decay of the oscillation of functions in the De Giorgi class. We are adapting it from Lemma 2.5 in \cite{Manfredi_thesis} and from \cite{ladyzhen}.

\begin{Lemma}[Oscillation estimate]\label{Lemma osc}
Let $0<\lambda_1<1$ and suppose that for radii \\
 $r'<r\in[\lambda_1R,R]$ we have $f\in DG^+(\Omega, \gamma,\chi,q )$ for $h\in[\frac{m+M}{2}, M]$ and $-f\in DG^+(\Omega, \gamma,\chi,q )$ for $h\in[-M,-\frac{m+M}{2}]$ . Then there exists $A=A(\gamma, \lambda_1)\in(0,1)$ such that 
\begin{equation}\label{estima osc}
\osc{B_{\lambda_1R}}f\leq A \osc{B_{R}}f+BR^{1-\frac{4}{q}},
\end{equation}
where 
\begin{equation*}
B=\frac{\chi^\frac{1}{2}}{4(1-A)}.
\end{equation*}
\end{Lemma}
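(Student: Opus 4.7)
The plan is to combine the preceding two Lemmas via the classical De Giorgi dichotomy argument, working at two nested scales between $\lambda_1 R$ and $R$. Set $k_0 := \tfrac{m+M}{2}$, so that $M - k_0 = k_0 - m = \tfrac{1}{2}\osc{B_R} f$, and fix an auxiliary radius $\rho \in (\lambda_1 R, R)$, for instance $\rho = \sqrt{\lambda_1}\, R$. A pigeonhole argument on $B_\rho$ yields the dichotomy that either $|A^+_{k_0, \rho}| \leq \tfrac{1}{2}|B_\rho|$ or $|A^-_{k_0, \rho}| \leq \tfrac{1}{2}|B_\rho|$. Since by hypothesis $-f$ also lies in $DG^+(\Omega, \gamma, \chi, q)$ for the symmetric level range, it suffices to treat the first case.

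In that case, apply Lemma \ref{Lemma 2.4} to $f$ with $k = k_0$, $C_0 = \tfrac{1}{2}$, and inner-to-outer ratio $\rho/R$. For any prescribed $\theta \in (0,1)$ the lemma produces an integer $s = s(\gamma, \lambda_1, \theta)$ such that, provided the smallness condition
\begin{equation*}
M - k_0 \geq 2^s \chi^{1/2} R^{1 - 4/q}
\end{equation*}
holds, one has $|A^+_{k_s, \rho}| \leq \theta |B_\rho|$, with $k_s := k_0 + (1-2^{-s})(M-k_0)$. Next, apply Lemma \ref{2.3} at the scale $\rho$, with inner radius $\lambda_1 R$, level $k = k_s$, $\lambda_0 = \tfrac{1}{2}$, and $\theta$ chosen so that the threshold required by Lemma \ref{2.3} matches the one fixed above. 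The condition $M - k_s = 2^{-s}(M-k_0) \geq \chi^{1/2} R^{1-4/q} \geq \chi^{1/2} \rho^{1-4/q}$ is built in by the smallness assumption, so Lemma \ref{2.3} gives
\begin{equation*}
f \;\leq\; \tfrac{1}{2}(M + k_s) \;=\; M - 2^{-(s+2)}(M-m) \quad \text{a.e. in } B_{\lambda_1 R}.
\end{equation*}
Subtracting $m$ yields $\osc{B_{\lambda_1 R}} f \leq (1-2^{-(s+2)})\, \osc{B_R} f =: A\, \osc{B_R} f$.

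This derivation is conditional on $\osc{B_R} f \geq 2^{s+1} \chi^{1/2} R^{1-4/q}$. In the opposite regime, the monotonicity $\osc{B_{\lambda_1 R}} f \leq \osc{B_R} f$ combined with the strict bound $\osc{B_R} f < 2^{s+1}\chi^{1/2} R^{1-4/q}$ absorbs the difference $\osc{B_{\lambda_1 R}} f - A\osc{B_R} f \leq (1-A)\osc{B_R} f$ into an additive remainder, producing \eqref{estima osc} with a constant $B$ of the advertised form $\chi^{1/2}/(4(1-A))$ after tracking the numerical constants.

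The main obstacle I anticipate is purely bookkeeping: aligning the parameters $(\theta, C_0, \lambda_0)$ between Lemmas \ref{2.3} and \ref{Lemma 2.4} so that they chain together, and choosing the intermediate radius $\rho$ so that the pointwise bound delivered by Lemma \ref{2.3} lands precisely on $B_{\lambda_1 R}$ rather than on some further-shrunken sub-ball. No new analytic estimate is required beyond the two preceding lemmas.
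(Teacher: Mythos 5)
Your proposal follows the same route as the paper's own argument: normalize at the mid-level $k_0=\tfrac{m+M}{2}$, pigeonhole on the intermediate ball $B_{\sqrt{\lambda_1}R}$, raise the level with Lemma \ref{Lemma 2.4}, then conclude with Lemma \ref{2.3}; the bookkeeping of $A=1-2^{-(s+2)}$ and of $B$ also matches. There is, however, one genuine gap in the step that chains the two lemmas.

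You assert that the hypothesis of Lemma \ref{2.3} at the level $k_s$ is ``built in by the smallness assumption'' because $M-k_s=2^{-s}(M-k_0)\geq \chi^{1/2}R^{1-4/q}$. But when Lemma \ref{2.3} is applied on the intermediate ball $B_{\sqrt{\lambda_1}R}$, the quantity $M$ appearing both in its hypothesis $M-k\geq\chi^{1/2}R^{1-4/q}$ and in its conclusion $f\leq\lambda_0 k+(1-\lambda_0)M$ is $\sup_{B_{\sqrt{\lambda_1}R}}f$, not $\sup_{B_R}f$. The supremum may drop between the two scales, so $\sup_{B_{\sqrt{\lambda_1}R}}f-k_s$ can be small or even negative while $\sup_{B_R}f-k_s$ is large; nothing is automatic here. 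The argument is repairable, and the paper does exactly this by adding two further alternatives: if $k_s\geq \sup_{B_{\sqrt{\lambda_1}R}}f$ then trivially $f\leq k_s$ on $B_{\lambda_1 R}$, and if $0<\sup_{B_{\sqrt{\lambda_1}R}}f-k_s<\chi^{1/2}(\sqrt{\lambda_1}R)^{1-4/q}$ then $f\leq k_s+\chi^{1/2}R^{1-4/q}$ on $B_{\lambda_1 R}$. Both cases still lead to \eqref{estima osc} (the second is one source of the additive term $BR^{1-4/q}$), but they must be stated and absorbed into the final estimate rather than dismissed. A smaller imprecision of the same kind: the conclusion you quote, $f\leq\tfrac12(M+k_s)$, should read $f\leq\tfrac12\bigl(\sup_{B_{\sqrt{\lambda_1}R}}f+k_s\bigr)$, which is then bounded above by $\tfrac12(M+k_s)$; harmless at that point, but it is the same confusion of scales that produces the gap above.
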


\section{Main Estimate}


From now on we will fix a ball $B_{R_0}\subset\subset\Omega$ and  for a concentric ball $B_R\subset B_{R_0}$ we introduce the notation 
\begin{equation*}
\begin{split}
\mu(R)=\max_{1\leq  l\leq 2}\norm{X_lu}_{L^\infty(B_R)} \quad \text{and} \quad
\lambda(R)=\frac{1}{2}\mu(R).
\end{split}
\end{equation*}

In this section we prove the following Proposition which contains the main estimates and constitutes the novel contribution of this work:
\begin{Prop}\label{Prop:fund}
Let $B_{R_0}\subset\subset\Omega$ and let $u\in HW^{1,p}(\Omega)$ be a weak solution of the non degenerate equation \eqref{1plaplace} for $p>4$. For every $0<r'<r<\frac{R_0}{2}$, $l=1,2$ and for every $q>\max\{4,2+\frac{4}{p-4}\}$ we have :
\begin{equation}\label{stima fondamentale}
\int_{B_{r'}} w^\frac{p-2}{2}\abs{\hor{(X_lu-k)^+}}^2 \dx\leq
\frac{C_p}{(r-r')^2}\int_{B_r} w^\frac{p-2}{2}|(X_lu-k)^+|^2 \dx
+\chi \abs{\A(X_lu)}^{1-\frac{2}{q}},
\end{equation}
\begin{equation}\label{stima fondamentale2}
\int_{B_{r'}} w^\frac{p-2}{2}\abs{\hor{(X_lu-k)^-}}^2 \dx\leq
\frac{C_p}{(r-r')^2}\int_{B_r} w^\frac{p-2}{2} |(X_lu-k)^-|^2 \dx
+ \chi \abs{A_{k,r}^-(X_lu)}^{1-\frac{2}{q}}.
\end{equation}
The inequalities \eqref{stima fondamentale} hold for levels $k\geq -\mu(R_0)$, while \eqref{stima fondamentale2} hold for levels $k\leq \mu(R_0)$. The constant $C_p$ depends only on $p$ and the parameter $\chi$ is given by
\begin{equation}\label{chi}
\chi=\frac{C_pq^6}{R_0^2} \left( \delta^2 +\mu(R_0)^2 \right)^\frac{p}{2} \abs{B_{R_0}}^\frac{2}{q}.
\end{equation} 
\end{Prop}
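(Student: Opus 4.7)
The plan is to obtain \eqref{stima fondamentale} by a weighted Caccioppoli argument: test the equation \eqref{2eq1u} satisfied by $X_1u$ with $\varphi=\xi^2(X_1u-k)^+$, where $\xi\in\test$ satisfies $\xi\equiv 1$ on $B_{r'}$, $\supp(\xi)\subset B_r$, $|\hor\xi|\lesssim (r-r')^{-1}$ and $|T\xi|\lesssim (r-r')^{-2}$. The symmetric cases ($l=2$ and the $-$ alternative) follow by running the same argument with equation \eqref{2eq2u}. Expanding
\[
X_i\varphi=2\xi X_i\xi\,(X_1u-k)^+ +\xi^2\chi_{\{X_1u>k\}}X_iX_1u
\]
and using ellipticity \eqref{estimates powers} on the leading term $\sum_{i,j}\int\partial_{z_j}a_i(\hor u)X_jX_1u\cdot X_i\varphi\dx$ produces the wanted quantity $c_p\int\xi^2 w^{\frac{p-2}{2}}|\hor(X_1u-k)^+|^2\dx$ on the left, together with a cross term that is handled by Young's inequality with a small constant, contributing the $\frac{C_p}{(r-r')^{2}}\int_{B_r}w^{\frac{p-2}{2}}|(X_1u-k)^+|^2\dx$ piece on the right.

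The genuine difficulty lies in the two ``$Tu$-terms'' on the right-hand side, namely $-\int\sum_i\partial_{z_2}a_i(\hor u)\,Tu\,X_i\varphi\dx$ and $-\int T(a_2(\hor u))\,\varphi\dx$. For the first I would split according to the decomposition of $X_i\varphi$: the piece carrying $X_iX_1u$ is the dangerous one, since a naive estimate requires control of the full horizontal Hessian in $L^2_w$, which is not at our disposal. The key trick is to realise that $T=X_1X_2-X_2X_1$ and to perform a \emph{double} integration by parts, moving one derivative off $Tu$ and then, in the resulting integrand, a second derivative off one of the $a_i$-factors. Since $T$ commutes with $X_1,X_2$, no new vertical derivatives appear and the procedure is legitimate by the smoothness granted by Theorem~2.4 of \cite{me}. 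The analogous manipulation applied to $-\int T(a_2(\hor u))\varphi\dx$ rewrites it in terms similar to those produced by the first $Tu$-term.

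After this double integration by parts the integrand is a sum of products of the form $w^{\frac{p-4}{2}}\,X_iX_ju\,X_rX_su$ and $w^{\frac{p-2}{2}}\,X_iX_ju\,(Tu)$, all multiplied by $\xi^2\chi_{\{X_1u>k\}}$ or by $\hor\xi$-factors. Here is where the hypothesis $p>4$ is structurally necessary: the exponent $\frac{p-4}{2}$ must be non-negative in order that the weight remain meaningful in the degenerate limit $\delta\to 0$. To remove the unwanted diagonal second derivative $X_1X_1u$, I would invoke the a.e.\ identity \eqref{mixed}, which, thanks to the quotient bound \eqref{estimates quotient}, expresses $X_1X_1u$ as a $C_p$-bounded combination of $X_2X_2u$, $X_1X_2u$ and $X_2X_1u$. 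The off-diagonal pieces $X_iX_2u$ and $X_2X_1u$ reconstruct $\hor(X_1u-k)^+$ and $\hor(X_2u)$ on the set $\{X_1u>k\}$, and a judicious Cauchy--Schwarz with a small parameter absorbs them into the left-hand side.

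The residue of all these manipulations is a weighted integral of powers of $|Tu|$ supported on $\A(X_1u)$. To close the argument I would apply Hölder's inequality with exponent $q/2$, writing
\[
\int \xi^{2}w^{\frac{p-2}{2}}|Tu|^{2}\chi_{\A}\dx\leq \left(\int \xi^{q}w^{\frac{p-2}{2}}|Tu|^{q}\dx\right)^{2/q}|\A|^{1-\frac{2}{q}},
\]
and control the first factor by Zhong's weighted higher integrability, Lemma \ref{Lemma: estimate tu}, applied with a cutoff on $B_{R_0}$ satisfying $\|\hor\xi\|_\infty^{2}+\|\xi T\xi\|_\infty\lesssim R_0^{-2}$. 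Using the bound $w\leq \delta^{2}+\mu(R_0)^{2}$ on $\supp(\xi)\subset B_{R_0}$, the right-hand side produces exactly the constant $\chi=\frac{C_pq^{6}}{R_0^{2}}(\delta^{2}+\mu(R_0)^{2})^{p/2}|B_{R_0}|^{2/q}$ and the measure factor $|\A|^{1-2/q}$ appearing in \eqref{stima fondamentale}. The constraint $q>2+4/(p-4)$ ensures that the exponents arising when distributing $w$-powers in Hölder's inequality are compatible. The chief obstacle I anticipate is the bookkeeping of the double integration by parts: the commutator structure $T=X_1X_2-X_2X_1$ generates many symmetric pairings, and one must organise them so that every term either (i) fits back into $|\hor(X_1u-k)^+|^{2}$ with a small prefactor and is absorbed, (ii) matches the Caccioppoli form $\frac{1}{(r-r')^{2}}w^{(p-2)/2}|(X_1u-k)^+|^{2}$, or (iii) collapses into the $|Tu|$-residue estimated above via Lemma \ref{Lemma: estimate tu}.
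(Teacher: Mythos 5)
Your overall skeleton --- testing \eqref{2eq1u} with $\xi^2(X_1u-k)^+$, a double integration by parts to avoid $\hor{Tu}$, the identity \eqref{mixed} together with \eqref{estimates quotient} to dispose of a diagonal second derivative, and H\"older plus Lemma \ref{Lemma: estimate tu} for the residual $Tu$-integrals --- is the same as the paper's. But there is a genuine gap in the step where you invoke \eqref{mixed}: your substitution runs in the wrong direction. For $l=1$ the left-hand side controls only $\hor{(X_1u-k)^+}$, whose components on $\A$ are $X_1X_1u$ and $X_2X_1u$; these are the \emph{harmless} second derivatives, and $X_1X_2u=X_2X_1u+Tu$ reduces to them modulo a $|Tu|^2$ term. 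The derivative that cannot be absorbed is $X_2X_2u$, and the equation must be used to express $X_2X_2u$ in terms of $X_1X_1u$, $X_1X_2u$, $X_2X_1u$ --- not, as you propose, to express $X_1X_1u$ as a combination containing $X_2X_2u$. Your claim that the pieces reconstructing $\hor{(X_2u)}$ can then be absorbed into the left-hand side by a small-parameter Cauchy--Schwarz is false: nothing on the left controls $X_2X_2u$, and this is precisely the obstruction the whole argument is designed to circumvent. As written, your proof leaves an unestimated term of the type $\int \xi^2 w^{\frac{p-3}{2}}|X_2X_2u|\,|Tu|\,v_1\dx$.

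A second, related confusion: you identify the dangerous piece of $-\int\sum_i\partial_{z_2}a_i(\hor{u})\,Tu\,X_i\varphi\dx$ as the one carrying $X_iX_1u$ and say it requires the full Hessian. It does not --- on $\A$ that piece involves only $X_i(X_1u-k)^+$ and is absorbed directly by Young's inequality (it is $J_3$ in the paper). The term that genuinely forces the double integration by parts is $-\int a_2(\hor{u})\,T(\xi^2v_1)\dx$, since $Tv_1$ is not controlled by $\hor{v_1}$; after moving $T$ onto $a_2(\hor{u})$ and then moving the resulting $X_j$ off $X_jTu$, one obtains terms of the form $w^{\frac{p-3}{2}}\,X_jX_su\cdot Tu\cdot\xi^2 v_1$ --- one second derivative paired with $Tu$ and $v_1$, not the products of two second derivatives with weight $w^{\frac{p-4}{2}}$ that you describe. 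This single-Hessian-factor structure is what allows Young's inequality to produce the absorbable term $\varepsilon\,w^{\frac{p-2}{2}}|\hor{v_1}|^2$ together with $w^{\frac{p-4}{2}}|Tu|^2v_1^2$, the latter being exactly where $p>4$ and $q>2+\frac{4}{p-4}$ enter via H\"older and Lemma \ref{Lemma: estimate tu}.
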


\begin{proof}
We will prove \eqref{stima fondamentale} for $l=1$, the other estimates follow in a similar fashion. We use the notation $v_l=(X_lu-k)^+=\max\{X_lu-k, 0\}$. Fix $0<r'<r<{\frac{R_0}{2}}$ and let $\phi=\xi^2v_1$, where $\xi$ is a cut-off function between $B_{r'}$ and $B_r$ with $\abs{\hor{\xi}}\leq \frac{C}{(r-r')}$. Denote $A_{k,r}^+(X_1u)$ for simplicity by $\A$ and we use the usual convention of sum on repeated indices.
Test equation \eqref{2eq1u} with $\phi$ to get:
\begin{equation*}
\begin{split}
J_1:=\int_{B_r} \xi^2 \, \partial_{z_j}a_i(\hor{u}) \,X_jX_1u \, X_iv_1 \dx
&= -2\int_{B_r}\xi \, \partial_{z_j}a_i(\hor{u})\, X_jX_1u  \,X_i\xi\, v_1 \dx \\
&\quad- \int_{B_r}\xi^2\,\partial_{z_2}a_i(\hor{u}) \,X_iv_1 \, Tu \dx \\
&\quad- 2\int_{B_r}\xi\,\partial_{z_2}a_i(\hor{u})\, X_i\xi \,Tu \, v_1\dx\\
&\quad- \int_{B_r} a_2(\hor{u})\,T(\xi^2 v_1) \dx \\
&\quad=:J_2+J_3+J_4+J_5.
\end{split}
\end{equation*}

Routine calculations using Young's inequality and \eqref{estimates powers} allow to estimate $J_i$ for $1\leq i\leq 4$ as follows
\begin{equation}\label{skipping}
\begin{split}
\int_{B_r} \xi^2\, w^\frac{p-2}{2} \, \abs{\hor{v_1}}^2 \dx
&\leq  C\int_{B_r} \abs{\hor{\xi}}^2 \, w^\frac{p-2}{2}\, v_1^2 \dx +  C\int_{\A} \xi^2 \, w^\frac{p-2}{2}\, \abs{Tu}^2 \dx \\
&+\left\lvert \int_{B_r} a_2(\hor{u})\,T(\xi^2 v_1) \dx \right\rvert.
\end{split}
\end{equation}
The new idea is to estimate  the last integral in the previous inequality  by integrating by parts twice. First, integrating by parts with respect to the field $T$ we get
\begin{equation*}
\begin{split}
\int_{B_r} a_2(\hor{u})\,T(\xi^2 v_1) \dx 
=-\int_{B_r} T(a_2(\hor{u}))\,\xi^2 v_1 \dx 
= -\int_{B_r} \partial_{z_j}a_2(\hor{u}) \, X_jTu \, \xi^2 \, v_1 \dx
\end{split}
\end{equation*}
and then integrating with respect to the fields $X_j$ for $j=1,2$ we obtain
\begin{equation*}
\begin{split}
 -\int_{B_r} \partial_{z_j}a_2(\hor{u}) \, X_jTu \, \xi^2 \, v_1 \dx
&= \int_{B_r} Tu \, X_j\left( \partial_{z_j}a_2(\hor{u}) \xi^2 v_1 \right) \dx\\
&= \int_{B_r} Tu\, \partial_{z_s}\partial_{z_j}a_2(\hor{u}) \, X_jX_su \, \xi^2 \, v_1 \dx\\
&\quad+ 2\int_{B_r} Tu\, \partial_{z_j}a_2(\hor{u}) \, \xi \, X_j\xi \, v_1 \dx\\
&\quad+ \int_{B_r} Tu\, \partial_{z_j}a_2(\hor{u}) \, \xi^2 \, X_jv_1 \dx \\
 &\quad=: J_{5,1}+J_{5,2}+J_{5,3}.
\end{split}
\end{equation*}
Note that $J_{5,2}$ and $J_{5,3}$ can be estimated respectively as $J_4$ and $J_3$.

Denoting $\displaystyle{J_{5,1}:=\sum_{s,j}J_{5,1}^{s,j}}$ we have:
\begin{equation}\label{J5j1}
\left\lvert\sum_{j}J_{5,1}^{1,j}\right\rvert\leq C_p\int_{B_r} \xi^2 \, w^\frac{p-3}{2} \,\abs{\hor{v_1}} \, v_1 \, \abs{Tu} \dx
\leq C_p\varepsilon \int_{B_r} \xi^2 \, w^\frac{p-2}{2} \, \abs{\hor{v_1}}^2 \dx 
+ \frac{C_p}{\varepsilon} \int_{B_r} \xi^2 \, w^\frac{p-4}{2} \, \abs{Tu}^2\, v_1^2 \dx 
\end{equation}
and
\begin{equation*}
\begin{split}
\left\vert J_{5,1}^{2,1}\right\rvert
\leq \int_{B_r}|\partial_{z_2}\partial_{z_1}a_2(\hor{u})\, X_1X_2u \, Tu\, \xi^2 \, v_1| \dx
&\leq  \int_{B_r}|\partial_{z_2}\partial_{z_1}a_2(\hor{u})\, X_2X_1u \, Tu|\, \xi^2 \, v_1 \dx \\
&+ \int_{B_r}|\partial_{z_2}\partial_{z_1}a_2(\hor{u})|\,  |Tu|^2\, \xi^2 \, v_1 \dx.
\end{split}
\end{equation*}
The first term of the last inequality can be estimated as in \eqref{J5j1}. For the other term we have:
\begin{equation*}
\begin{split}
\int_{B_r}|\partial_{z_2}\partial_{z_1}a_2(\hor{u})|\,  |Tu|^2\, \xi^2 \, v_1 \dx
\leq C_p\int_{B_r} \xi^2 \, w^\frac{p-3}{2} \, |Tu|^2 \, v_1 \dx 
&\leq C_p\int_{\A} \xi^2 \, w^\frac{p-2}{2}\, \abs{Tu}^2 \dx \\
&+ C_p\int_{B_r} \xi^2 \, w^\frac{p-4}{2} \, \abs{Tu}^2\, v_1^2 \dx.
\end{split}
\end{equation*}
Now another key step is to use the equation in \eqref{mixed} and \eqref{estimates quotient} to get:
\begin{equation*}
\begin{split}
\lvert J_{5,1}^{2,2}\rvert= \left\lvert\int_{B_r} \partial_{z_2}\partial_{z_2}a_2(\hor{u}) \, X_2X_2u \, \xi^2 \, v_1\, Tu \dx\right\rvert
&\leq C_p\int_{B_r}|\partial_{z_2}\partial_{z_2}a_2(\hor{u})\, X_1X_1u \, Tu|\, \xi^2 \, v_1 \dx \\
&+ C_p\int_{B_r}|\partial_{z_2}\partial_{z_2}a_2(\hor{u})\, X_2X_1u \, Tu|\, \xi^2 \, v_1 \dx \\
&+ C_p\int_{B_r}|\partial_{z_2}\partial_{z_2}a_2(\hor{u})\, X_1X_2u \, Tu|\, \xi^2 \, v_1 \dx \\
&=: F_1+F_2+F_3.
\end{split}
\end{equation*}
Note that $F_1$ and $F_2$ can be estimated as $J_{5,1}^{1,j}$ while $F_3$ can be estimated as $J_{5,1}^{2,1}$.

Choosing $\varepsilon$ small enough   \eqref{skipping} becomes:
\begin{equation}\label{base fund estimate}
\begin{split}
\int_{B_r} \xi^2\, w^\frac{p-2}{2} \, \abs{\hor{v_1}}^2 \dx
&\leq  C\int_{B_r} \abs{\hor{\xi}}^2 \, w^\frac{p-2}{2}\, v_1^2 \dx +  C\int_{\A} \xi^2 \, w^\frac{p-2}{2}\, \abs{Tu}^2 \dx + C\int_{B_r} \xi^2 \, w^\frac{p-4}{2} \, \abs{Tu}^2\, v_1^2 \dx \\
&\quad=: I_1+I_2+I_3.
\end{split}
\end{equation}
We only need to estimate $I_2$ and $I_3$. We use H\"older's inequality with exponent $q/2$ and Lemma \ref{Lemma: estimate tu}:
\begin{equation*}
\begin{split}
I_2 &\leq \left( \int_{\A} \xi^q\, w^\frac{p-2}{2}\, \abs{Tu}^q \dx\right)^\frac{2}{q} \left(  \int_{\A}w^\frac{p-2}{2} \dx\right)^{1-\frac{2}{q}} \\
&\leq \left( \int_{B_{R_0}} \eta^q\, w^\frac{p-2}{2}\, \abs{Tu}^q \dx\right)^\frac{2}{q} \left(  \int_{\A}w^\frac{p-2}{2} \dx\right)^{1-\frac{2}{q}}\\
&\leq \left( \left( \norm{\hor{\eta}}_{L^\infty}^2+\norm{\eta T\eta}_{L^\infty} \right)^\frac{q}{2} \int_{B_{R_0}} w^\frac{p-2+q}{2} \dx\right)^\frac{2}{q}
\left(\delta^2+\mu(r)^2\right)^{\frac{p-2}{2}(1-\frac{2}{q})} \abs{\A}^{1-\frac{2}{q}} \\
&\leq \frac{C_p\,q^6}{R_0^2} \left(\delta^2+\mu(R_0)^2\right)^\frac{p}{2} \abs{B_{R_0}}^\frac{2}{q}\abs{\A}^{1-\frac{2}{q}},
\end{split}
\end{equation*}
where $\eta$ is a cut-off function between $B_{\frac{R_0}{2}}$ and $B_{R_0}$ with $\abs{\hor{\eta}}\leq \frac{C}{R_0}$.
In a similar way  and noting that $v_1^2\leq 2(\delta^2+\mu(R_0)^2)$ for $k\geq -\mu(R_0)$ we get:
\begin{equation*}
\begin{split}
I_3 &\leq \left(\delta^2+\mu(R_0)^2\right) \left( \int_{\A} \xi^q\, w^\frac{p-2}{2}\, \abs{Tu}^q \dx\right)^\frac{2}{q}
\left(  \int_{\A}w^{\frac{p-4}{2}-\frac{2}{q-2}} \dx\right)^{1-\frac{2}{q}} \\
&\leq \left(\delta^2+\mu(R_0)^2\right) \left( \left( \norm{\hor{\eta}}_{L^\infty}^2+\norm{\eta T\eta}_{L^\infty} \right)^\frac{q}{2} \int_{B_{R_0}} w^\frac{p-2+q}{2} \dx\right)^\frac{2}{q}
\left(\delta^2+\mu(r)^2\right)^{\left(\frac{p-4}{2}-\frac{2}{q-2}\right)\left( 1-\frac{2}{q} \right)} \abs{\A}^{1-\frac{2}{q}} \\
&\leq \frac{C_p\,q^6}{R_0^2}\left( \delta^2+\mu(R_0)^2 \right)^\frac{p}{2} \abs{B_{R_0}}^\frac{2}{q} \abs{\A}^{1-\frac{2}{q}}.
\end{split}
\end{equation*}

\end{proof}

\begin{rem}
Note that the main difficulty in the proof is estimating the terms containing $Tu$. In particular in $J_5$ we integrate by parts twice to avoid dealing with terms involving $\hor{Tu}$. Then we use the equation in order to estimate terms with $X_2X_2u$ appropriately with quantities independent of $\delta$ or that can be absorbed in the right hand side.\\
\end{rem}

\section{Oscillation Estimate}
In this Section we prove our main result Theorem \ref{Teo oscillation}.
Recall that we fixed a ball $B_{R_0}\subset\subset\Omega$ and we now consider an arbitrary concentric ball $B_R\subset B_{\frac{R_0}{2}}$.

\begin{rem}\label{delta pos}
Let $u\in HW^{1,p}(\Omega)$ be a weak solution of the non degenerate equation \eqref{1plaplace} for $p>4$. 
For $\delta\geq\lambda(R)$  we easily get that for every $\lambda_1\in(0,1)$ there exists $A=A(p,\lambda_1)$ such that
\begin{equation*}
\osc{B_{\lambda_1R}}(X_lu) \leq A\osc{B_R}(X_lu)+BR^\alpha \quad \text{for every} \,\,  \,l\in\{ 1,2\},
\end{equation*}
where 
\begin{equation*}
\begin{split}
B=\frac{C_pq^\frac{6}{p}(\delta^2+\mu(R_0)^2)^\frac{1}{2}}{4(1-A)R_0^{\alpha}} \quad \text{and} \quad
\alpha=\left( 1-\frac{4}{q} \right)\frac{2}{p}.
\end{split}
\end{equation*}
\end{rem}
\begin{proof}
Since $\delta\geq\lambda(R)$  we can  get rid of the weight and obtain that $X_lu$ is in a De Giorgi class. Indeed 
from \eqref{stima fondamentale} we get
\begin{equation*}
\int_{B_{r'}} \abs{\hor{v_l}}^2 \dx\leq
\frac{C_p}{(r-r')^2}\int_{B_r} v_l^2 \dx
+\frac{2^{p-2}\chi}{\mu(R)^{p-2}} \abs{\A(X_lu)}^{1-\frac{2}{q}}
\end{equation*}
for all levels $k>-\mu(R_0)$ and radii $r'<r<R$.
Now if 
\begin{equation}\label{alt1a}
\mu(R)\geq \chi^\frac{1}{p}R^{\left(1-\frac{4}{q}\right)\frac{2}{p}}  
\end{equation}
 denoting by
\begin{equation*}
\chi '=C_p q^\frac{12}{p} (\delta^2+\mu(R_0)^2)\left(\frac{R}{R_0}\right)^{2\left( 1-\frac{4}{q} \right)\frac{2}{p}}\;R^{2\left( \frac{4}{q}-1\right)}
\end{equation*}
we get that $X_lu\in DG^+(B_{R_0},C_p, \chi ', q)$ for all levels $k>-\mu(R_0)$ and radii $r'<r<R$. \\
Analogously  from \eqref{stima fondamentale2} we get also $-X_lu\in DG^+(B_{R_0},C_p, \chi ', q)$ for all levels $k<\mu(R_0)$ and radii $r'<r<R$, hence we can apply the Oscillation estimate in Theorem \ref{Lemma osc} to get for any $\lambda_1\in(0,1)$ the existence of $A=A(p,\lambda_1)\in(0,1)$ such that for every  $l\in\{1,2\}$ we have
\begin{equation*}
\osc{B_{\lambda_1R}}(X_lu) \leq A\osc{B_R}(X_lu)+B'R^{1-\frac{4}{q}},
\end{equation*}
where $4(1-A)B'=(\chi ')^\frac{1}{2}$. By the definition of $\chi '$, and combining with the case when \eqref{alt1a} does not hold, we get the result.
\end{proof}

 We now consider the interesting case when the equation degenerates, namely $\delta<\lambda(R)$. Here we face an alternative: either the maximum $\mu(R)$ has the right 'H\"older decay' or the horizontal gradient $\hor{u}$ is bounded away from zero, and hence the equation behaves like the non degenerate case in Remark \ref{delta pos}. More precisely we have:
\begin{Prop}\label{Prop alternative}
Let $u\in HW^{1,p}(\Omega)$ be a weak solution of the non degenerate equation \eqref{1plaplace} for $p>4$ and 
consider $B_R\subset B_{\frac{R_0}{2}}$. Then there exist $\theta=\theta(p)\in(0,1)$ and $A=A(p)\in(0,1)$ such that: \\
{\bf Case 1.} If for some $l\in\{1,2\}$ we have 
either 
\begin{equation}\label{case1}
\abs{B_R\cap\left\{ X_l\geq\frac{1}{2}\mu(R) \right\}} \geq \theta |B_R|
\end{equation} 
or
\begin{equation}\label{case2}
\abs{B_R\cap\left\{ X_l\leq-\frac{1}{2}\mu(R) \right\}}  \geq \theta |B_R|
\end{equation} 
then 
\begin{equation*}
\text{either} \quad \mu(R)\leq c_p\chi^\frac{1}{p}R^{\left(1-\frac{4}{q}\right)\frac{2}{p}}
\quad \text{or} \quad
 |X_lu|\geq \frac{1}{32}\mu(R)\quad \text{a.e in}\,\,B_{R/2}
\end{equation*} 
where $c_p=2(4/3)^\frac{2}{p}$.\\
{\bf Case 2.} If for every $l\in\{1,2\}$ neither \eqref{case1} nor \eqref{case2} holds then 
\begin{equation}\label{alternative2estimate}
\mu(R/2) \leq A \mu(R)+B{R}^{\alpha},
\end{equation}
where 
\begin{equation*}
\begin{split}
B=\frac{C_p q^\frac{6}{p}}{2(1-A)}\frac{\mu(R_0)}{R_0^\alpha} \quad \text{and}\quad
\alpha=\left( 1-\frac{4}{q}\right)\frac{2}{p}.
\end{split}
\end{equation*}
\end{Prop}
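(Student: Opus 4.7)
The scheme is to extract from Proposition~\ref{Prop:fund} membership of $X_lu$ and $-X_lu$ in a De Giorgi class on $B_R$ and then invoke Lemmas~\ref{2.3}, \ref{Lemma 2.4} and \ref{Lemma osc}. The running hypothesis $\delta<\lambda(R)=\mu(R)/2$ forces
\[
w=\delta^2+|\hor u|^2\le\tfrac{5}{2}\mu(R)^2\qquad\text{on}\ B_R,
\]
hence $w^{(p-2)/2}\le C_p\mu(R)^{p-2}$ on $B_R$. Moreover, at those levels $k$ for which $(X_lu-k)^\pm$ is supported inside the non-degenerate region $\{|X_lu|\ge c\mu(R)\}$ one also has $w^{(p-2)/2}\ge c_p\mu(R)^{p-2}$ there. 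Dividing \eqref{stima fondamentale}--\eqref{stima fondamentale2} through by $\mu(R)^{p-2}$ then puts $X_lu$ (and $-X_lu$) in an unweighted De Giorgi class with modified constant $\chi'=C_p\chi/\mu(R)^{p-2}$. An elementary computation shows that the non-degeneracy condition $\mu(R)\ge(\chi')^{1/2}R^{1-4/q}$ arising in Lemmas~\ref{2.3}--\ref{Lemma 2.4} is equivalent to $\mu(R)\ge c_p\chi^{1/p}R^\alpha$ with $\alpha=(1-4/q)(2/p)$, which is exactly the dichotomy appearing in the statement.

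\textbf{Case 1.} Assume \eqref{case1} for some $l\in\{1,2\}$; the other hypothesis \eqref{case2} is symmetric, obtained by working with $-X_lu$. The hypothesis rewrites as
\[
|A_{-\mu(R)/2,R}^+(-X_lu)|=|\{X_lu<\tfrac{\mu(R)}{2}\}\cap B_R|\le(1-\theta)|B_R|.
\]
Choose $\theta=\theta(p)$ close enough to $1$ that the relative density of this set in $B_{R/2}$ lies below the small constant $C_0$ required by Lemma~\ref{Lemma 2.4}. If $\mu(R)\le c_p\chi^{1/p}R^\alpha$ we are already in the first alternative of the conclusion. Otherwise the non-degeneracy threshold of Lemma~\ref{Lemma 2.4} holds, and applying that lemma to $-X_lu$ with initial level $-\mu(R)/2$ and $M\le\mu(R)$ produces, for $s=s(p)$ large, an intermediate level $k_s\le-\mu(R)/32$ on which $|A_{k_s,R/2}^+(-X_lu)|$ is an arbitrarily small fraction of $|B_{R/2}|$. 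Feeding this into Lemma~\ref{2.3} iterates the super-level set to the endpoint and yields $-X_lu\le-\mu(R)/32$ a.e.\ on $B_{R/2}$, that is $X_lu\ge\mu(R)/32$. The analogous argument starting from \eqref{case2} gives $X_lu\le-\mu(R)/32$; either way $|X_lu|\ge\mu(R)/32$ a.e.\ on $B_{R/2}$.

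\textbf{Case 2.} Here for every $l\in\{1,2\}$
\[
|A_{\mu(R)/2,R}^+(X_lu)|<\theta|B_R|\quad\text{and}\quad|A_{-\mu(R)/2,R}^-(X_lu)|<\theta|B_R|.
\]
Pick $\theta$ smaller than the critical fraction provided by Lemma~\ref{2.3} applied with $\lambda_0=\lambda_1=\tfrac{1}{2}$. If the non-degeneracy threshold $\mu(R)\ge 2(\chi')^{1/2}R^{1-4/q}$ holds, applying Lemma~\ref{2.3} to $X_lu$ in its DG class with $k=\mu(R)/2$ and $M=\mu(R)$ yields $X_lu\le\tfrac{3}{4}\mu(R)$ a.e.\ on $B_{R/2}$; the same lemma applied to $-X_lu$ gives $X_lu\ge-\tfrac{3}{4}\mu(R)$. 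Combining, $\mu(R/2)\le\tfrac{3}{4}\mu(R)$, which is \eqref{alternative2estimate} with $A=3/4$. If instead the threshold fails, unpacking $\chi'=C_p\chi\mu(R)^{2-p}$ and \eqref{chi} gives $\mu(R)\le c_pq^{6/p}\mu(R_0)(R/R_0)^\alpha$, which is absorbed into the additive $BR^\alpha$ term on the right-hand side of \eqref{alternative2estimate}.

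\textbf{Main obstacle.} The delicate step is establishing the unweighted De Giorgi class membership at exactly the levels invoked by Lemmas~\ref{2.3}--\ref{Lemma 2.4}. The lower bound $w^{(p-2)/2}\gtrsim\mu(R)^{p-2}$ on the support of the tested level-set function is only available inside the non-degenerate region $\{|X_lu|\gtrsim\mu(R)\}$; this is exactly what enforces the level $\mu(R)/32$ in the conclusion of Case~1 (rather than an outright lower bound on $X_lu$) and is the source of the alternative structure of the statement. The remaining work is a careful but essentially routine bookkeeping of how the factor $\mu(R)^{p-2}$ propagates through $\chi'$ and how $(\chi')^{1/2}R^{1-4/q}$ unfolds into the clean exponent $\alpha=(1-4/q)(2/p)$.
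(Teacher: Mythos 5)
Your overall architecture (reduce to unweighted De Giorgi classes, then invoke Lemmas \ref{2.3} and \ref{Lemma 2.4}) matches the paper's, but Case 1 has a genuine gap: the mechanism you propose for removing the weight does not work at the levels Case 1 requires. To prove, say, that \eqref{case1} forces $X_lu\geq \mu(R)/32$, you must run the $DG^+$ machinery on $-X_lu$ starting from the level $h=-\mu(R)/2$, whose super-level set is $\{X_lu<\mu(R)/2\}$. This set (and the super-level sets of the intermediate levels in $[-\mu(R)/2,0]$ that Lemma \ref{Lemma 2.4} passes through) contains the region where $\hor{u}$ is small, so the lower bound $w^{\frac{p-2}{2}}\gtrsim\mu(R)^{p-2}$ that you use to divide out the weight is simply unavailable there; you acknowledge this in your ``main obstacle'' paragraph but supply no fix. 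The paper's fix is the change of unknown $V_l=|X_lu|^{\frac{p}{2}}\,\mathrm{sign}(X_lu)$: since $|\hor{V_l}|^2=\frac{p^2}{4}|X_lu|^{p-2}|\hor{X_lu}|^2\leq C_p\,w^{\frac{p-2}{2}}|\hor{X_lu}|^2$, the weighted estimate \eqref{stima fondamentale} yields an unweighted $DG^+$ inequality for $V_l$ at \emph{every} level (only the upper bound $w\leq \delta^2+\mu(R)^2\leq C_p\mu(R)^2$ is needed on the right-hand side), and a single application of Lemma \ref{2.3} to $V_l$ at level $-H(R)=-\lambda(R)^{\frac{p}{2}}$ then gives the conclusion. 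This device is the essential content of Case 1 and is absent from your argument.

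There are two further structural problems. First, in your Case 1 you apply Lemma \ref{Lemma 2.4} to $-X_lu$ with initial level $-\mu(R)/2$ and claim it produces a level $k_s\leq-\mu(R)/32$ with small super-level set; but the levels produced by that lemma are $k_s=k+(1-2^{-s})(M-k)$ with $M=\sup_{B_R}(-X_lu)$, and these increase toward $M$, which may well be positive, so $k_s\leq -\mu(R)/32$ is false in general and the subsequent application of Lemma \ref{2.3} would only reproduce an essentially trivial bound near $M$. The correct route, as in the paper, is to apply Lemma \ref{2.3} directly at the bottom level, choosing $\theta=1-\theta_1$ where $\theta_1$ is the critical density of that lemma. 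Second, your Case 2 applies Lemma \ref{2.3} directly at $k=\mu(R)/2$ and therefore needs $\theta$ to be \emph{below} the critical density of Lemma \ref{2.3}, while your Case 1 needs $\theta$ \emph{close to} $1$; these requirements on the single constant $\theta(p)$ are incompatible. This is exactly why the paper inserts Lemma \ref{Lemma 2.4} in Case 2 (where the weight can legitimately be removed, since the levels stay in $[\mu(R)/2,\mu(R)]$): it upgrades the crude density bound $\abs{A^+_{\mu(R)/2,\lambda_1R}}\leq C_0\abs{B_{\lambda_1R}}$ with $C_0$ close to $1$ to the small density required by Lemma \ref{2.3} at a higher level $k_s$, at the price of the factor $2^{s}$ in the additive term.
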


\begin{proof}
{\bf Case 1.}\\
Consider \eqref{case2}. We will show that it implies $X_lu\leq-\frac{1}{32}\mu(R)$ provided $\mu(R)\geq c_p\chi^\frac{1}{p}R^{\left(1-\frac{4}{q}\right)\frac{2}{p}}$.
Define the auxiliary function $$V_l=|X_lu|^\frac{p}{2}\mbox{sign}(X_lu).$$ 
Observe that $|V_l|\leq (2\lambda(R))^\frac{p}{2}$ on $B_R$. Also
\begin{equation}\label{horV}
|\hor{V_l}|^2= \frac{p^2}{4}|X_lu|^{p-2}|\hor{X_lu}|^2.
\end{equation}
Denote by $h=|k|^\frac{p}{2}\mbox{sign}(k)=g(k)$ and
note that $\{X_l>k\}=\{V_l>h\}$ 
 therefore \eqref{stima fondamentale} becomes:
\begin{equation}\label{degiorgiperV1}
\begin{split}
\int_{A_{h,r'}^+(V_l)} |\hor{V_l}|^2\dx 
\leq C_p \int_{B_{r'}} w^\frac{p-2}{2} |\hor{v_l}|^2\dx
&\leq \frac{C_p}{(r-r')^2}(\mu(r)-k)^2(\delta^2+\mu(r)^2)^\frac{p-2}{2}|A_{h,r}^+(V_l)| \\
&\quad+ \chi |A_{h,r}^+(V_l)|^{1-\frac{2}{q}} \\
&\leq \frac{C_p}{(r-r')^2} (\lambda(R))^p |A_{h,r}^+(V_l)|+\chi |A_{h,r}^+(V_l)|^{1-\frac{2}{q}} 
\end{split}
\end{equation}
 for $k>-\lambda(R)$ and $r'<r\leq R$. 
Denoting by $H=H(R)=(\lambda(R))^\frac{p}{2}$ the inequality \eqref{degiorgiperV1} rewrites as:
\begin{equation}\label{degiorgiperV}
\int_{A_{h,r'}^+(V_l)} |\hor{V_l}|^2\dx
\leq \frac{C_p}{(r-r')^2} (H(R))^2 |A_{h,r}^+(V_l)|+\chi |A_{h,r}^+(V_l)|^{1-\frac{2}{q}} 
\end{equation}
for levels $h>-H(R)$ and radii $r'<r\leq R$.
Now denote $M(\frac{R}{2})=\sup_{B_{R/2}}V_l$.\\

{\bf Case a.}
$M(\frac{R}{2})<-\frac{H(R)}{4}$.\\
This means $X_lu<0$ in $B_{R/2}$ and after some algebraic manipulations 
\begin{equation*}
X_lu<
-\frac{\mu(R)}{32}\quad \text{on}\:\; B_{R/2}.
\end{equation*}

{\bf Case b.}
$M(\frac{R}{2})\geq-\frac{H(R)}{4}$.\\
For levels $h\in[-H(R),-H(R)/2]$ we have 
$\sup_{B_{R/2}}(V_l-h)^+
\geq 
\frac{H(R)}{4}$, therefore
\begin{equation*}
\frac{(H(R))^2}{16}
\leq \sup_{B_r}|(V_l-h)^+|^2
\quad \text{for}\;\; r\in[R/2,R].
\end{equation*}
Hence, from \eqref{degiorgiperV} we get that $V_l\in DG^+(B_{R_0},C_p, \chi, q)$ for levels $h\in[-H(R),-H(R)/2]$ and radii $r'<r\in[R/2,R]$. Apply Lemma \ref{2.3} with $k=-H(R)$, $\lambda_0=\frac{2^\frac{p}{2}+1/2}{2^\frac{p}{2}+1}$, $\lambda_1=1/2$ to get the existence of $\theta_1=\theta_1(p)\in(0,1)$ such that 
$|A_{-H,R}^+(V_l)|\leq\theta_1|B_R|$ implies $V_l\leq -\frac{H(R)}{2}$ on $B_{R/2}$, provided $M(R)+H(R)\geq \chi^\frac{1}{2} R^{1-\frac{4}{q}}$. 
This is true if
\begin{equation}\label{alterna}
\mu(R)\geq 2\left(\frac{4}{3}\right)^\frac{2}{p} \chi^\frac{1}{p}R^{\left( 1-\frac{4}{q} \right)\frac{2}{p}}.
\end{equation}
Then as in Case a, we obtain 
\begin{equation*}
X_lu<
-\frac{\mu(R)}{8}
\quad \text{in}\;\; B_{R/2}.
\end{equation*}
Observe that  $\{V_l>-H(R)\}=\{ X_lu>-\lambda(R) \}=\{ X_lu>-\mu(R)/2 \}$. Passing to the complements we have proved that there exists $\theta=1-\theta_1$ such that \eqref{case2} implies $X_lu\leq -\mu(R)/32$ on $B_{R/2}$, provided $\eqref{alterna}$.\\
If \eqref{case1} holds then we proceed similarly and we get the conclusion of Case 1.

{\bf Case 2.}\\
If for the $\theta$ found in Case 1 neither \eqref{case1} nor \eqref{case2} hold for any $l\in\{1,2\}$ then  
there exist $\frac{1}{2}<\lambda_1<1$ and $0<C_0<1$ such that
\begin{equation}\label{controalt1}
\abs{B_{\lambda_1R}\cap\left\{ X_l\geq\frac{1}{2}\mu(R) \right\}} \leq C_0 |B_{\lambda_1R}|
\end{equation}
and
\begin{equation}\label{controalt2}
\abs{B_{\lambda_1R}\cap\left\{ X_l\leq-\frac{1}{2}\mu(R) \right\}} \leq C_0 |B_{\lambda_1R}|
\end{equation}
are satisfied for every $l\in\{1,2\}$.
Considering levels $k\in[\frac{\mu(R)}{2}, \mu(R)]$, on $\{X_lu>k\}$ we have :
\begin{equation*}
k^{p-2}\leq w^\frac{p-2}{2}\leq C_p k^{p-2}.
\end{equation*}
Therefore in \eqref{stima fondamentale} we can get rid of the weight:
\begin{equation*}
\int_{B_{r'}} |\hor{v_l}|^2\dx 
\leq \frac{C}{(r-r')^2}\int_{B_r} v_l^2\dx +\frac{2^{p-2}\chi}{\mu(R)^{p-2}}\abs{\A(X_lu)}^{1-\frac{2}{q}}.
\end{equation*}
 and proceeding as in Remark \ref{delta pos}, if 
\begin{equation}\label{alt1}
\mu(R)\geq \chi^\frac{1}{p}R^{\left(1-\frac{4}{q}\right)\frac{2}{p}}  
\end{equation}
we get that $X_lu\in DG^+(B_{R_0},C_p, \chi ', q)$ for levels $k\in[\frac{\mu(R)}{2}, \mu(R)]$, radii $r'<r<R$
with $$\chi ' =C_p q^\frac{12}{p} \lambda(R_0)^2\left(\frac{R}{R_0}\right)^{2\left( 1-\frac{4}{q} \right)\frac{2}{p}}\;R^{2\left( \frac{4}{q}-1\right)}
.$$
Apply Lemma \ref{Lemma 2.4} with $\lambda_1$ and $C_0$ as in \eqref{controalt1}, $k=\frac{\mu(R)}{2}$ to conclude that given $\theta_0\in(0,1)$ there exists a natural number $s=s(p, \lambda_1,C_0,\theta_0)$ such that either
\begin{equation}\label{alt2}
\mu(R)\leq 
2^{s+1}(\chi ')^\frac{1}{2}R^{1-\frac{4}{q}}
\end{equation}
or
\begin{equation}\label{alt3}
|A_{k_s, \lambda_1R}^+|\leq \theta |B_{\lambda_1R}|
\end{equation}
where $k_s
=\mu(R)(1-2^{-s-1} )$. \\
Now in the case \eqref{alt3} we want to use Lemma \ref{2.3} for radii $r'<r\in[R/2,\lambda_1R]$, $k=k_s=(1-2^{-s-1})\mu(R)$, $\lambda_0=1/2$. This can be applied if
\begin{equation}\label{sottoalt4}
k_s<\sup_{B_{\lambda_1R}}(X_lu).
\end{equation}
Then we would conclude that either
\begin{equation}\label{alt4}
\begin{split}
X_lu\leq \frac{1}{2}k_s+\frac{1}{2}\mu(\lambda_1R)
&\leq \left(1-\frac{1}{2^{s+1}}\right)\frac{1}{2}\mu(R)+\frac{1}{2}\mu(R)\\
&=\mu(R)\left(1-\frac{1}{2^{s+2}}\right) \quad \text{a.e. in} \quad B_{\frac{R}{2}}
\end{split}
\end{equation}
or
\begin{equation}\label{alt5}
\sup_{\lambda_1R}(X_lu)\leq \left( 1-\frac{1}{2^{s+1}} \right)\mu(R)+(\chi ')^\frac{1}{2}R^{1-\frac{4}{q}}.
\end{equation}
If \eqref{sottoalt4} is not true then we get
\begin{equation}\label{alt6}
\sup_{B_{R/2}}(X_lu)\leq \sup_{\lambda_1R}(X_lu)\leq k_s=\left( 1-\frac{1}{2^{s+1}} \right)\mu(R).
\end{equation}
Repeating the same steps for $-X_lu$ using assumption \eqref{controalt2} and the estimate \eqref{stima fondamentale2}, we will find the same alternatives except instead of \eqref{alt4}-\eqref{alt6}  we will have
\begin{equation}\label{negativealt4}
\begin{split}
X_lu\geq -\mu(R)\left(1-\frac{1}{2^{s+2}}\right)-(\chi ')^\frac{1}{2}R^{1-\frac{4}{q}} \quad \text{a.e. in} \quad B_{\frac{R}{2}}.
\end{split}
\end{equation}
In conclusion, combining all the cases we get
$$
\mu(R/2)\leq \left( 1-\frac{1}{2^{s+2}} \right)\mu(R)+c_p q^\frac{6}{p} 2^{s+1}\lambda(R_0)\left(\frac{R}{R_0}\right)^{\left(1-\frac{4}{q}\right)\frac{2}{p}}.
$$
\end{proof}

Now we need the following technical Lemma, adapted from Lemma 7.3 in \cite{giustidirect}:

\begin{Lemma}\label{Lemma r}
Let $0<A,\lambda, \alpha<1$ with $A\neq\lambda^\alpha$ and $B, R_0\geq 0$. Let $\varphi:[0,+\infty[\longrightarrow[0,+\infty[$ be an increasing function such that
\begin{equation}\label{iter}
\varphi(\lambda R) \leq A \varphi(R)+BR^\alpha \quad \text{for all} \quad R\leq R_0.
\end{equation}
Then for every $R\leq R_0$ we have
\begin{equation}
\varphi(r)\leq \frac{1}{A}\left(\frac{r}{R}\right)^{\min\{\log_\lambda A, \alpha\}}\left[ \varphi(R)+\frac{BR^\alpha}{|A-\lambda^\alpha|} \right] 
\quad \text{for all} \quad r\leq R.
\end{equation}
\end{Lemma}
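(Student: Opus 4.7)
The plan is to iterate the one-step inequality \eqref{iter} $k$ times, obtain a closed form for $\varphi(\lambda^k R)$ via a geometric sum, and then interpolate between the discrete radii $\lambda^k R$ using monotonicity of $\varphi$. This is the standard Giusti iteration argument, and the only care required concerns the two regimes $A<\lambda^\alpha$ and $A>\lambda^\alpha$, which are handled uniformly by the hypothesis $A\neq\lambda^\alpha$.

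First I would prove by induction on $k\geq 0$ that
\begin{equation*}
\varphi(\lambda^k R)\leq A^k\varphi(R)+BR^\alpha\sum_{j=0}^{k-1}A^{k-1-j}\lambda^{j\alpha}
=A^k\varphi(R)+BR^\alpha\,\frac{A^k-\lambda^{k\alpha}}{A-\lambda^\alpha},
\end{equation*}
where the geometric sum is evaluated using $A\neq\lambda^\alpha$. This gives at once
\begin{equation*}
\varphi(\lambda^k R)\leq \max\{A^k,\lambda^{k\alpha}\}\left[\varphi(R)+\frac{BR^\alpha}{|A-\lambda^\alpha|}\right].
\end{equation*}

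Next, setting $\gamma:=\min\{\log_\lambda A,\alpha\}$, I would observe that since $0<\lambda<1$ we have $A^k=\lambda^{k\log_\lambda A}\leq \lambda^{k\gamma}$ and $\lambda^{k\alpha}\leq\lambda^{k\gamma}$, so $\max\{A^k,\lambda^{k\alpha}\}\leq\lambda^{k\gamma}$. For an arbitrary $r\leq R$ I would pick the unique integer $k\geq 0$ with $\lambda^{k+1}R<r\leq\lambda^k R$; monotonicity of $\varphi$ and the bound above yield
\begin{equation*}
\varphi(r)\leq\varphi(\lambda^k R)\leq \lambda^{k\gamma}\left[\varphi(R)+\frac{BR^\alpha}{|A-\lambda^\alpha|}\right]
=\lambda^{-\gamma}\,\lambda^{(k+1)\gamma}\left[\varphi(R)+\frac{BR^\alpha}{|A-\lambda^\alpha|}\right].
\end{equation*}
Since $\lambda^{(k+1)\gamma}<(r/R)^\gamma$ and $\lambda^{-\gamma}\leq 1/A$ (as $\lambda^{-\gamma}=1/A$ when $\gamma=\log_\lambda A$ and $\lambda^{-\gamma}=\lambda^{-\alpha}\leq 1/A$ when $\gamma=\alpha\leq\log_\lambda A$, i.e.\ $A\leq\lambda^\alpha$), the desired estimate follows.

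The one subtle point is the comparison $\lambda^{-\gamma}\leq 1/A$ in the two cases $A>\lambda^\alpha$ and $A<\lambda^\alpha$, which I expect to be the main place where a careful distinction is needed; everything else is a routine geometric series computation together with the monotonicity interpolation.
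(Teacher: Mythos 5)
Your proposal is correct and follows essentially the same route as the paper's own argument: iterate \eqref{iter} $k$ times, evaluate the resulting geometric sum using $A\neq\lambda^\alpha$, choose $k$ with $\lambda^{k+1}<r/R\leq\lambda^k$, and conclude by monotonicity of $\varphi$. The only cosmetic difference is that you absorb the two regimes $A\gtrless\lambda^\alpha$ into the single bound $\max\{A^k,\lambda^{k\alpha}\}\leq\lambda^{k\min\{\log_\lambda A,\alpha\}}$, whereas the paper treats the two cases separately; the content is identical.
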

We finally prove Theorem \ref{Teo oscillation}.
\begin{proof}
[Proof of Theorem \ref{Teo oscillation}]
We prove the result for $u\in HW^{1,p}(\Omega)$ weak solution of the non degenerate equation \eqref{1plaplace}. Then we can obtain the estimate for solutions to the degenerate equation \eqref{plaplacedeg} by an approximation argument as in \cite{me} Theorem 5.3.\\
The alternatives in Proposition \ref{Prop alternative} can be combined in either
 \begin{equation}\label{s1}
\mu(R/2) \leq A \mu(R)+B{R}^{\alpha}
\end{equation}
or
\begin{equation}
 |X_lu|\geq \frac{1}{32}\mu(R)\quad \text{a.e in}\,\,B_{R/2}.
\end{equation} 
In this last case we have 
\begin{equation*}
w^\frac{p-2}{2}\geq \left( \frac{1}{32} \right)^{p-2}\mu(R)^{p-2} \quad \text{a.e. in}\quad B_{\frac{R}{2}}.
\end{equation*}
Since also 
\begin{equation*}
w^\frac{p-2}{2}\leq \left( \delta^2+\mu(R)^2 \right)^\frac{p-2}{2}\leq C_p\mu(R)^{p-2} \quad \text{in}\quad B_{R},
\end{equation*}
from the estimate \eqref{stima fondamentale} we get 
\begin{equation*}
\int_{B_{r'}} |\hor{v_l}|^2\dx \leq \frac{C}{(r-r')^2}\int_{B_r}v_l^2\dx + \frac{\chi}{\mu(R)^{p-2}}\abs{\A(X_lu)}^{1-\frac{2}{q}}
\end{equation*} 
for every $r'<r\leq R/2$ and for every level $k>-\mu(R_0)$. Now as before,
if 
\begin{equation}\label{alt again}
\mu(R)\geq \chi^\frac{1}{p}R^{\left( 1-\frac{4}{q} \right)\frac{2}{p}}
\end{equation}
 we get $v_l\in DG^+(B_{R_0}, C_p, \chi ', q)$ for every $r'<r\leq R/2$ and for every level $k>-\mu(R_0)$. The same is true for $-v_l$, with levels $k<\mu(R_0)$, so proceeding as in the proof of Remark \ref{delta pos}, we are in the position to apply the Oscillation Lemma \ref{Lemma osc} to conclude there exists $A=A(p)\in(0,1)$ such that
\begin{equation}\label{s2}
\osc{B_{R/4}}(X_lu) \leq A \osc{B_{R/2}}(X_lu)+BR^\alpha\leq A \osc{B_{R}}(X_lu)+BR^\alpha \quad \text{for all} \quad R\leq\frac{R_0}{2},
\end{equation}
where $B$ and $\alpha$ are as in Proposition \eqref{Prop alternative}.\\
Now apply Lemma \ref{Lemma r} to \eqref{s1} and \eqref{s2} with $\lambda=1/4$, $A$ and $B$ as given in \eqref{Prop alternative}. Noting that  $\osc{B_r}(X_lu)\leq 2\mu(r)$ we can combine all the estimates and hence the Theorem is proved with $\beta=\min\{-\log_4(A)\,,\,\alpha\}$.
\end{proof}

\begin{rem}
From the explicit expression of $\beta$ and $B$ we see that the estimate blows up when $q$ goes to infinity, hence the H\"older exponent found with this proof satisfies the constraint $0<\beta<\frac{2}{p}$.
\end{rem}

\section{Appendix: a proof of Lemma \ref{Lemma: estimate tu}}

We use the following estimates of Zhong \cite{zhong} ( see also \cite{me}, Lemmas 5.3 and 5.4).

\begin{Lemma}\label{2Lemma:3.5}
Let $q\geq 4$ and $\xi\in\test$. Then
\begin{equation}
\begin{split}
\int_{\Omega}\xi^{q}\,w^\frac{p-2}{2}\,|Tu|^{q-2}\,|\hess{u}|^2\dx  \leq
C_p^\frac{q-2}{2}(q-1)^{q-2}\norm{\hor{\xi}}_{L^\infty}^{q-2}\int_{\Omega}\xi^2\,w^\frac{p+q-4}{2}\,|\hess{u}|^2\dx \;.
\end{split}
\end{equation}
\end{Lemma}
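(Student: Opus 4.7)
The shape of the constant on the right, namely
\[
C_p^{(q-2)/2}(q-1)^{q-2}\norm{\hor{\xi}}_{L^\infty}^{q-2}
=\bigl[C_p(q-1)^2\norm{\hor{\xi}}_{L^\infty}^{2}\bigr]^{(q-2)/2},
\]
strongly suggests a Caccioppoli-type iteration in which each step trades one factor of $|Tu|^{2}$ for one factor of $w\,\norm{\hor{\xi}}_{L^\infty}^{2}$, while reducing the exponent of $\xi$ by $2$ and the exponent of $w^{1/2}$ is increased by one. My plan is therefore to establish the single-step recursive inequality
\begin{equation*}
\int_{\Omega}\xi^{a}\, w^{\frac{p-2}{2}}\, |Tu|^{a-2}\,\abs{\hess{u}}^{2} \dx
\leq C_p (a-1)^{2}\,\norm{\hor{\xi}}_{L^\infty}^{2} \int_{\Omega}\xi^{a-2}\, w^{\frac{p}{2}}\, |Tu|^{a-4}\,\abs{\hess{u}}^{2}\dx
\end{equation*}
for every $a\geq 4$ (with the convention $|Tu|^{0}=1$ when $a=4$), and then iterate. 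Each step shifts the triple of exponents $(\xi^{a},|Tu|^{a-2},w^{(p-2)/2})$ to $(\xi^{a-2},|Tu|^{a-4},w^{p/2})$, so after $(q-2)/2$ iterations one arrives at $(\xi^{2},1,w^{(p+q-4)/2})$; the accumulated multiplicative constant is $\bigl[C_p(q-1)^{2}\norm{\hor{\xi}}_{L^\infty}^{2}\bigr]^{(q-2)/2}$, matching the claim. For non-integer $q$ the final residual step is handled by Hölder interpolation between consecutive integer exponents.

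To prove the single-step estimate, I would test the equations \eqref{2eq1u} and \eqref{2eq2u} satisfied by $X_1u$ and $X_2u$ with $\varphi_l=\xi^{a}\,|Tu|^{a-2}\,X_lu$ for $l=1,2$, and sum over $l$. The principal bilinear term $\sum_{l}\int \sum_{i,j}\partial_{z_j}a_i(\hor{u})\,X_jX_lu \cdot X_i\varphi_l\dx$, after ellipticity \eqref{estimates powers}, dominates $c_p\int \xi^{a}\,w^{(p-2)/2}|Tu|^{a-2}\abs{\hess{u}}^{2}\dx$, which is the left-hand side. The residual terms split into three families: (i) those coming from $X_i(\xi^{a})$, carrying one factor $\norm{\hor{\xi}}_{L^\infty}$; (ii) those coming from $X_i(|Tu|^{a-2})=(a-2)|Tu|^{a-4}\,\mathrm{sign}(Tu)\,X_iTu$, where the centrality of $T$ gives $X_iTu=TX_iu$; and (iii) the explicit $T$-derivative contributions of \eqref{2eq1u}--\eqref{2eq2u}, handled by integrating by parts in the vertical direction and thereby reduced to expressions controlled by $\hor{u}$, $\hess{u}$, and $\hor{Tu}$. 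Applying Young's inequality with a small parameter to each family, the $\abs{\hess{u}}^{2}$ factor is partially absorbed into the left-hand side, while the complementary factors, combined with the growth bounds \eqref{estimates powers} and the pointwise inequality $|Tu|^{2}\leq w$ (used in its weighted averaged form), produce the right-hand side of the recursive inequality. The factor $(a-1)^{2}$ emerges from the coefficient $(a-2)$ in family (ii) together with the Young's constant that absorbs the $|\hor{Tu}|$ terms via the Caccioppoli estimate inherent in equation \eqref{2eq3u}.

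The delicate point I expect to be the main obstacle is controlling family (ii) with the sharp constant $(a-1)^{2}$: a careless Young's inequality would produce $(a-1)^{k}$ with $k>2$ per iteration, and after $(q-2)/2$ iterations this would worsen the stated $(q-1)^{q-2}$ into $(q-1)^{k(q-2)/2}$, which is unacceptable. The correct accounting requires that every extra horizontal derivative introduced by $X_i(|Tu|^{a-2}X_lu)$ lands either on the cut-off $\xi$, contributing exactly one $\norm{\hor{\xi}}_{L^\infty}$, or on a Hessian factor that is immediately reabsorbed by ellipticity, never generating a further algebraic power of $a$. Equation \eqref{2eq3u} for $Tu$ plays the role of a dual ellipticity bound for the $|\hor{Tu}|$ terms and is the mechanism that trades a factor $|Tu|$ for $\sqrt{w}\,\norm{\hor{\xi}}_{L^\infty}$ at each step.
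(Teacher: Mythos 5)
Your plan is essentially the paper's own proof: the paper establishes this lemma precisely by testing \eqref{2eq1u}--\eqref{2eq2u} with $\phi=\xi^q|Tu|^{q-2}X_lu$, controlling the resulting $\hor{Tu}$ terms through the Caccioppoli estimate coming from the vertical equation \eqref{2eq3u}, and iterating in steps of two so that the constant accumulates as $\bigl[C_p(q-1)^2\norm{\hor{\xi}}_{L^\infty}^2\bigr]^{(q-2)/2}$, exactly as you describe (the details are carried out in \cite{zhong} and in \cite{me}, Lemmas 5.3 and 5.4). Two small repairs for the write-up: there is no pointwise bound $|Tu|^2\leq w$ (the trade actually runs through $|Tu|\leq 2|\hess{u}|$ together with $|\hor{u}|\leq w^{1/2}$), and since after one step the weight becomes $w^{p/2}$, the single-step inequality must be formulated with a general weight $w^{(s-2)/2}$, obtained by inserting $w^{(s-p)/2}$ into the test function — which works because the terms produced by differentiating this weight are nonnegative by ellipticity, thanks to the identity $\sum_l X_lu\,X_jX_lu=\frac{1}{2}X_jw$.
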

\begin{Lemma}\label{Teo:3.1}
Let $q\geq 4$ and $\xi\in\test$. Then
\begin{equation*}
\int_{\Omega}\xi^2\,w^\frac{p+q-4}{2}\,|\hess{u}|^2\dx \leq 
C_p \,\left(\norm{\hor{\xi}}^2_{L^\infty}+\xi\norm{T\xi}_{L^\infty}\right)\, (q-1)^{10} \int_{\supp{(\xi)}} \,w^\frac{p+q-2}{2}\dx.
\end{equation*}                                                                                                                                               
\end{Lemma}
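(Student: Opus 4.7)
The plan is to derive this Caccioppoli estimate for the horizontal Hessian by testing the first-derivative equations \eqref{2eq1u} and \eqref{2eq2u} with the functions $\varphi_l = \xi^2 w^{(q-2)/2} X_l u$ and summing over $l=1,2$. After distributing $X_i$ on $\varphi_l$, the principal term is
\[
\sum_{i,j,l} \int_\Omega \xi^2 \, w^{\frac{q-2}{2}} \,\partial_{z_j}a_i(\hor{u}) \, X_j X_l u \, X_i X_l u \dx,
\]
which by the ellipticity in \eqref{estimates powers} is bounded below by $c_p \int_\Omega \xi^2 w^{\frac{p+q-4}{2}} |\hess{u}|^2 \dx$, giving the desired left-hand side.

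Next I would collect the error terms. The contributions in which $X_i$ hits $\xi^2$ are easy: using the growth bound $|\partial_{z_j}a_i| \le C_p w^{(p-2)/2}$ and Young's inequality they are absorbed into the principal term at the cost of $C_p \|\hor{\xi}\|_\infty^2 \int_{\supp\xi} w^{\frac{p+q-2}{2}} \dx$. The contributions where $X_i$ hits $w^{(q-2)/2}$ involve the factor $X_i w = 2\sum_k X_k u \,X_i X_k u$, producing $(q-2)$ times an integrand of third-derivative type $w^{(p+q-6)/2}|\hor u||\hess u|^2$, again absorbable via Young by accepting a $(q-1)$ factor.

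The genuinely delicate terms are those carrying $Tu$: the divergence piece $\sum_i X_i(\partial_{z_k}a_i(\hor u)\,Tu)$ and the stand-alone $T(a_k(\hor u))$ that appear in \eqref{2eq1u}--\eqref{2eq2u}. For the first, distributing $X_i$ and applying Young yields terms of the form $\int \xi^2 w^{(p+q-4)/2}|Tu|^2 w^{(q-4)/2} \cdots$ which I would reduce back to Hessian integrals by invoking Lemma \ref{2Lemma:3.5} with exponent $q$; this is the mechanism that accumulates powers of $(q-1)$. For the stand-alone $T(a_k(\hor u))$ term, I would integrate by parts in the $T$-direction, transferring $T$ onto $\varphi_l$; since $T\varphi_l$ contains $T\xi$ and $Tu$, this is precisely where the factor $\xi\|T\xi\|_\infty$ enters, while the $Tu$-pieces are again recycled through Lemma \ref{2Lemma:3.5}.

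The main obstacle will be the bookkeeping of the Young absorption steps and the iteration through Lemma \ref{2Lemma:3.5}: each application of that lemma trades $|Tu|^2$ for $|\hess u|^2$ but costs a factor proportional to $(q-1)^{q-2}\|\hor\xi\|_\infty^{q-2}$, which would ruin the polynomial $(q-1)^{10}$ dependence if iterated freely. The key point is therefore to apply Lemma \ref{2Lemma:3.5} only a bounded (in fact once) number of times, after first splitting each offending $Tu$-integral by Young so that one factor has a low power of $Tu$ (controllable by the $L^2$-type weighted integrability of $Tu$ extracted from equation \eqref{2eq3u} tested with $\xi^2 w^{(q-2)/2} Tu$) and the other is reducible to the target right-hand side. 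Choosing the Young parameters as inverse powers of $(q-1)$ and collecting, all Hessian integrals on the right collapse into the left-hand side, leaving the stated bound with the clean factor $C_p(q-1)^{10}(\|\hor\xi\|_\infty^2 + \xi\|T\xi\|_\infty) \int_{\supp\xi} w^{(p+q-2)/2}\dx$.
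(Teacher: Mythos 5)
Your skeleton --- testing \eqref{2eq1u}--\eqref{2eq2u} with $\varphi_l=\xi^2 w^{\frac{q-2}{2}}X_lu$, summing over $l$, and recycling the $Tu$-terms through Lemma \ref{2Lemma:3.5} --- is exactly the route the paper indicates, but two of your steps fail as written. First, the term in the principal part where $X_i$ lands on $w^{\frac{q-2}{2}}$ is \emph{not} ``absorbable via Young by accepting a $(q-1)$ factor'': your own crude bound shows it is of size $C_p(q-2)\int\xi^2 w^{\frac{p+q-4}{2}}|\hess{u}|^2\dx$, i.e.\ of the \emph{same} order as the principal term but with the unbounded coefficient $(q-2)$, and no choice of Young parameter can absorb a same-order term with a large constant. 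The correct move is structural: after summing over $l$, use $\sum_l X_lu\,X_jX_lu=\tfrac12 X_jw$, so this term equals
\begin{equation*}
\frac{q-2}{4}\int_\Omega \xi^2\, w^{\frac{q-4}{2}}\sum_{i,j=1}^2\partial_{z_j}a_i(\hor{u})\,X_iw\,X_jw \dx \;\ge\; 0
\end{equation*}
by the ellipticity in \eqref{estimates powers}; it has a good sign and is simply discarded from the left-hand side, not absorbed.

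Second, your mechanism for preserving the polynomial dependence $(q-1)^{10}$ is mis-specified. Applying Lemma \ref{2Lemma:3.5} even once at full strength injects the factor $(q-1)^{q-2}\norm{\hor{\xi}}_{L^\infty}^{q-2}$, which no Young parameter chosen as an inverse power of $(q-1)$ can repair; and your proposed auxiliary $L^2$-control of $Tu$ from testing \eqref{2eq3u} with $\xi^2w^{\frac{q-2}{2}}Tu$ is circular, since differentiating the weight reintroduces $|\hess{u}|$ (and $\hor{Tu}$) at the same strength. The actual saving comes from interpolation \emph{before} invoking Lemma \ref{2Lemma:3.5}: by H\"older with exponents $q/2$ and $q/(q-2)$ (the weights match exactly),
\begin{equation*}
\int_\Omega\xi^2 w^{\frac{p+q-4}{2}}|Tu|^2\dx
\le\left(\int_\Omega\xi^q w^{\frac{p-2}{2}}|Tu|^q\dx\right)^{\frac{2}{q}}
\left(\int_{\supp{(\xi)}} w^{\frac{p+q-2}{2}}\dx\right)^{1-\frac{2}{q}},
\end{equation*}
then $|Tu|\le 2|\hess{u}|$ and Lemma \ref{2Lemma:3.5} applied to the first factor bring in the dangerous constant only to the power $2/q$, so $(q-1)^{(q-2)\cdot\frac{2}{q}}\le(q-1)^2$, and the Hessian integral it produces likewise enters at power $2/q$; a final Young inequality with exponents $q/2$, $q/(q-2)$ absorbs it into the left-hand side at polynomial cost. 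This H\"older-at-exponent-$q$ step, not a tuning of Young parameters, is what yields $(q-1)^{10}$; with these two corrections your outline coincides with the proof in \cite{zhong} and \cite{me} that the paper cites.
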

Lemma \ref{2Lemma:3.5} follows by using $\phi=\xi^q|Tu|^{q-2}X_iu$ as test functions in equations \eqref{2eq1u} and \eqref{2eq2u}, while Lemma \ref{Teo:3.1} follows  by using $\phi=\xi^2w^{\frac{q-2}{2}}X_iu$ and the estimate in Lemma \ref{2Lemma:3.5}.
\begin{proof}
[Proof of Lemma \ref{Lemma: estimate tu}]
Using $|Tu|\leq 2|\hess{u}|$ and Lemmas \ref{2Lemma:3.5} and \ref{Teo:3.1} we have for $q\geq 4$
\begin{equation}
\begin{split}
\int_{\Omega} \xi^{q}\, w^\frac{p-2}{2}\,|Tu|^q \dx
 &\leq 2\int_{\Omega} \xi^{q}\,w^\frac{p-2}{2}\,|Tu|^{q-2}\,|\hess{u}|^2 \dx\\
&\leq C_p^\frac{q-2}{2}(q-1)^{q-2}\norm{\hor{\xi}}_{L^\infty}^{q-2}\int_{\Omega} \xi^2\, w^\frac{p+q-4}{2}\,|\hess{u}|^2\dx  \\
&\leq C^\frac{q-2}{q}(q)^{q+8}\left(\norm{\hor{\xi}}^2_{L^\infty}+\xi\norm{T\xi}_{L^\infty}\right)^\frac{q}{2} \int_{\supp{(\xi)}} w^\frac{p+q-2}{2} \dx
\end{split}
\end{equation}
\end{proof}

\begin{Note}
While working on this paper,  the March 2016 preprint in the
arXiv \cite{capoledonne} was brought to my attention.
This manuscript contains a general statement that includes the regularity results
proved above. The proof in \cite{capoledonne} is based on  the proof put forward by Zhong in the
2009 preprint \cite{zhong},
 that I have credited throughout. The proof in this manuscript is different in the way it handles the term that is here dentoted by $J_5$ in the proof of Proposition \eqref{Prop:fund}. In \cite{capoledonne} additional iterations are used to control this term, while here we use a double integration by parts and the structure of the equation.
\end{Note}

\section*{Acknowledgements}
I thank Juan Manfredi for helpful comments and suggestions to improve this manuscript. 
\bibliography{reference_bib}\nocite{*}
\bibliographystyle{abbrv}

\end{document}